\long\def\symbolfootnote[#1]#2{\begingroup%
\def\thefootnote{\fnsymbol{footnote}}\footnote[#1]{#2}\endgroup}
\newcommand{\Z}{\ensuremath{\mathbb{Z}}}
\newcommand{\Aut}{\textup{Aut}}
\newcommand{\R}{\mathbb R}
\newcommand{\N}{\mathbb{N}}
\def \Out {\mathrm{Out}}
\def \MCG {\mathrm{MCG}}
\def \PMCG {\mathrm{PMCG}}
\def \Homeo {\mathrm{Homeo}}
\def \Ends {\mathrm{Ends}}
\def \Inn {\mathrm{Inn}}
\newcommand{\Id}{\mathrm{Id}}
\def\imod#1{\allowbreak\mkern10mu({\operator@font mod}\,\,#1)}
\newtheorem{theorem}{Theorem}[section]
\newtheorem{lemma}[theorem]{Lemma}
\newtheorem{corollary}[theorem]{Corollary}
\newtheorem*{theorem*}{Theorem}
\theoremstyle{definition}
\newtheorem{remark}[theorem]{Remark}
\newtheorem{example}[theorem]{Example}
\numberwithin{equation}{section}
\newcommand{\ignore}[1]{}
\newcommand{\mynote}[1]{}
\newcommand{\secref}[1]{Section~\ref{#1}}
\newcommand{\thmref}[1]{Theorem~\ref{#1}}
\newcommand{\lemref}[1]{Lemma~\ref{#1}}
\newcommand{\corref}[1]{Corollary~\ref{#1}}
\begin{document}
\setcounter{section}{0}
\setcounter{tocdepth}{1}
\title{Twisted Conjugacy in Big Mapping Class Groups}
\author[Sushil Bhunia]{Sushil Bhunia}
\author[Swathi Krishna]{Swathi Krishna}
\address{Department of Mathematics, BITS-Pilani, Hyderabad Campus, Hyderabad, India}
\email{sushilbhunia@gmail.com}
\address{Tata Institute of Fundamental Research, Homi Bhabha Road, Mumbai, Maharashtra, India}
\email{swathi280491@gmail.com}
\subjclass[2010]{Primary 20E45; Secondary 57M07, 20F65}
\keywords{Twisted conjugacy, infinite-type surfaces, big mapping class groups}
\date{\today}
\begin{abstract}
Let $G$ be a group and $\varphi$ be an automorphism of $G$. Two elements $x, y$ of $G$ are said to be $\varphi$-twisted conjugate if $y=gx\varphi(g)^{-1}$ for some $g\in G$. A group $G$ has the $R_{\infty}$-property if the number of $\varphi$-twisted conjugacy classes is infinite for every automorphism $\varphi$ of $G$.
In this paper, we prove that the big mapping class group $\MCG(S)$ possesses the $R_{\infty}$-property under some suitable conditions on the infinite-type surface $S$.
As an application, we also prove that the big mapping class group possesses the $R_\infty$-property if and only if it satisfies the $S_{\infty}$-property.
\end{abstract}
\maketitle
\section{Introduction}\label{intro}
A natural generalization of conjugacy is twisted conjugacy and the study of this can be dated back to $1939$ by the work of Gantmacher (see \cite{gant}). 
Let $G$ be a group and $\varphi$ an automorphism of $G$. Two elements $x, y\in G$ are said to be $\varphi$-twisted conjugate, denoted by $x\sim_{\varphi} y$, if $y=gx\varphi(g)^{-1}$ for some $g\in G$. Clearly, $\sim_{\varphi}$ is an equivalence relation on $G$. The equivalence classes with respect to this relation are called the \textit{$\varphi$-twisted conjugacy classes} or \textit{the Reidemeister classes} of $\varphi$. If $\varphi=\Id$, then the $\varphi$-twisted conjugacy classes are the (ordinary) conjugacy classes. The $\varphi$-twisted conjugacy class containing $x\in G$ is denoted by $[x]_{\varphi}$ or simply $[x]$, when $\varphi$ is clear from the context. The Reidemeister number of $\varphi$, denoted by $R(\varphi)$, is the number of all $\varphi$-twisted conjugacy classes. 
This number is either a natural number or $\infty$.
A group $G$ is said to have the \textit{$R_{\infty}$-property} if $R(\varphi)$ is infinite for every automorphism $\varphi$ of $G$. 

The Reidemeister number is closely related to the Nielsen number of a self-map of a manifold. More precisely, let $M$ be a closed connected manifold of dimension at least $3$ and $f: M\to M$ a continuous map, the minimal number of fixed points among all continuous maps  homotopic to $f$ is the Nielsen number $N(f)$. Note that the Nielsen number $N(f)$ is always finite and is homotopy invariant. The computation of $N(f)$ is difficult and a central object to study in the Nielsen-Reidemeister fixed point theory (cf. \cite{fel10}). This number is bounded above by the Reidemeister number $R(f_{\#})$, where $f_{\#} $ is the induced map on $ \pi_1(M) $. If $M$ is a Jiang-type space, then either $N(f)=0$ or $N(f)=R(f_{\#})$ (we refer the reader to \cite[p. 37, Theorem 5.6]{Jiang} for details). Thus, in this scenario, if 
$R(f_{\#})=\infty$ then $N(f)=0$ which in turn implies that one can deform $f$, up to homotopy, to a fixed-point free map. 

The class of groups which have the $R_{\infty}$-property is a robust and active area of research begun by Fel'shtyn and Hill in \cite{fh94}. The problem has a long history of references, for example, see \cite{fel10, ft15, dlo, ds} and the references therein. 
In the context of algebraic groups, Steinberg proved that
for a connected linear algebraic group $G$ over an algebraically closed field $k$ and a surjective endomorphism $\varphi$ of
G, if the fixed-point subgroup $G^{\varphi}$ is finite then $R(\varphi)=1$, and hence $G$ does not satisfy the $R_{\infty}$-property (see \cite[Theorem 10.1]{St}). The above result is classically known as the \emph{Lang-Steinberg theorem}. Recently, Bose and the first author of this paper proved that a connected linear algebraic group $G$ has the (algebraic) $R_{\infty}$-property if and only if the fixed-point subgroup $G^{\varphi}$ is infinite for every automorphism $\varphi$ of $G$ (cf. \cite[Theorem 17]{bb20}, \cite[Theorem 2.4]{bb21}).
In the realm of geometric group theory, in \cite{ll00, fel01} the authors proved that the non-elementary (Gromov) hyperbolic groups have the $R_{\infty}$-property. Further, non-elementary relatively hyperbolic groups were shown to have this property by Fel'shtyn (see \cite[Theorem 3.3 (2)]{fel10}).

The motivation of the present work comes from the results of Fel'shtyn and Gon\c{c}alves \cite{fg10}. 
The \textit{mapping class group} $\MCG(S)$ of  a surface $S$ is the group of isotopy classes of orientation-preserving homeomorphisms of $S$.
They proved that the mapping class group $\MCG(S)$ of an orientable closed surface $S$ has the $R_{\infty}$-property if and only if $S$ is not $\mathbb{S}^2$ (see \cite[Theorem 4.3]{fg10}). In the Appendix of that paper, Dahmani et al. proved that, except for a few low complexity cases, the mapping class group of a compact surface has the $R_{\infty}$-property,  using geometric group theoretic methods. 
Recently, there has been a surge of activities on infinite-type surfaces and corresponding groups, namely the big mapping class groups (see \cite{bigsurvey}). 

The goal of this paper is to exhibit a similarity between the finite-type and infinite-type settings. 
Now, let $S$ be an infinite-type surface (i.e. $S$ is a connected, second countable, oriented $2$-manifold without boundary such that the fundamental group $\pi_1(S)$ is not finitely generated) and $\MCG(S)$ denote the big mapping class group of $S$.   
As opposed to the above mentioned results (mostly algebraic in nature) our approach will be significantly more geometric. The technique from the Appendix of \cite{fg10} can be modified suitably in our set-up to show that the big mapping class group $\MCG(S)$ does satisfy the $R_{\infty}$-property under some restrictions. 
A finite-type connected subsurface $\Sigma\subset S$ is called \textit{non-displaceable} if $f(\Sigma)\cap \Sigma\neq \emptyset$ for all $ f\in \Homeo(S)$. One of the main results of this paper is the following:

\begin{theorem*}[\thmref{mainthmnondis}]
	If $S$ is a connected orientable infinite-type surface without boundary containing a non-displaceable subsurface, then $\MCG(S)$ has the $R_{\infty}$-property.
\end{theorem*}
\noindent
Using the Birman exact sequence, we prove that if $S$ is an infinite-type surface with an isolated puncture, then the big mapping class group $\MCG(S)$ possesses the $R_{\infty}$-property (see \corref{stosx}). For details, see \secref{mcg} and \secref{results}.
We also prove the following.
\begin{theorem*}[\thmref{111}]
	Let $S$ be an infinite-type surface such that the space of ends $\Ends(S)$ is a Cantor set $C$ with the following: either (a) $\mathrm{genus}(S)<\infty$ or (b) $\Ends(S)=\Ends^{np}(S)$.
	Then $\MCG(S)$ possesses the $R_{\infty}$-property.
\end{theorem*}
As an immediate application of the above results, we characterize the $S_{\infty}$-property of the mapping class groups. For necessary definitions and results, we refer to \secref{isogred}. In particular, we prove the following result in this direction.

\begin{theorem*}[\thmref{mainthm2}]
	Let $S$ be a connected orientable surface without boundary. Then $\MCG(S)$ has the $R_{\infty}$-property if and only if $\MCG(S)$ has the $S_{\infty}$-property.
\end{theorem*}

\section{Preliminaries}\label{prel}
\noindent
In this section, we fix some notations and terminologies and collect some basic results which will be used throughout this paper.
\subsection{Infinite-type surfaces}
Let $S$ be a connected, orientable surface ($2$-dimensional manifold) without boundary. Then
$S$ is said to be of \textit{finite-type} if its fundamental group is finitely generated; otherwise, $S$ is said to be of \textit{infinite-type}. 
Note that whenever the surface $S$ is of infinite-type the fundamental group $\pi_1(S)$ of $S$ is a free group of countable infinite rank (see, for instance, \cite[p. 142, Theorem 4.2.2]{sj93}). Finite-type surfaces (without boundary) are classified by the genus and number of punctures. We need more information to classify infinite-type surfaces, which we will discuss here. Let $S$ be an infinite-type surface.
An \textit{exiting sequence} is a collection 
$U_1 \supseteq  U_2 \supseteq \cdots $ of connected open subsets of $S$ such that
\begin{enumerate}[leftmargin=*]
	\item For any $n$, $U_n$ is not relatively compact.
	\item The boundary of $U_n$ is compact for all $n$.
	\item Every relatively compact subset of $S$ is disjoint from $U_n$ for all but
	finitely many $n$.
\end{enumerate}
Two exiting sequences are \textit{equivalent} if every element of the
first sequence is contained in some element of the second, and vice-versa.
An \emph{end of $S$} is an equivalence class of exiting sequences, and we
write $\mathrm{Ends}(S)$ for the space of ends of $S$.
Note that $\mathrm{Ends}(S)$ is a 
totally disconnected, separable and compact space. In particular, it is a subset of a Cantor set. We refer to \cite[Section 2.1]{bigsurvey} for the topology of the space of ends.
An end of $S$ is called \textit{planar} if it has a representative exiting sequence
whose elements are eventually planar; otherwise it is said to be \textit{non-planar} (or \emph{accumulated by genus}).
We denote by $\mathrm{Ends}^p(S)$ and $\mathrm{Ends}^{np}(S)$ the subspaces of planar
and non-planar ends of $S$, respectively. Clearly, $\mathrm{Ends}(S) = \mathrm{Ends}^p(S)\sqcup\mathrm{Ends}^{np}(S)$. 
 Infinite-type surfaces without boundary have been classified by the following (due to Ker\'{e}kj\'{a}rt\'{o} \cite{k23} and Richards \cite{richards}): 
two connected orientable surfaces $S_1$ and $S_2$ are homeomorphic if and only if they have the same genus,
and there exists a homeomorphism $h: 
\mathrm{Ends}(S_1)\rightarrow \mathrm{Ends}(S_2)$ whose restriction to $\mathrm{Ends}^{np}(S_1)$ defines a homeomorphism
between $\mathrm{Ends}^{np}(S_1)$ and $\mathrm{Ends}^{np}(S_2)$. Note that $\text{genus}(S)<\infty$ if and only if $\Ends^{np}(S)=\emptyset$.
\subsection{Mapping class groups}\label{mcg}
For a surface $S$, let $\Homeo(S)$ denote the group of homeomorphisms of $S$. The subgroup of $\Homeo(S)$ consisting of homeomorphisms which are isotopic to the identity is denoted by $\Homeo_0(S)$ (this is also the connected component of the identity with respect to the compact-open topology). Let $\Homeo^+(S)$ (resp. $\Homeo^-(S)$) denote the set of all orientation-preserving (resp. orientation-reversing) homeomorphisms of $S$. 
The (extended) \textit{mapping class group} of $S$, denoted by $\MCG^{\pm}(S)$, is the group of isotopy classes of homeomorphisms of $S$, i.e. $\MCG^{\pm}(S)=\Homeo(S)/\Homeo_0(S)$. Equivalently, it is defined as $\pi_0(\Homeo(S))$ with respect to the compact open topology. Now $\MCG^{\pm}(S)$ acts naturally by homeomorphisms on the space of ends $\Ends(S)$, and the kernel is called the pure mapping class group, denoted by $\PMCG^{\pm}(S)$. Both of these groups have subgroups of index $2$, denoted by $\MCG(S)$ and $\PMCG(S)$ respectively, consisting of orientation-preserving homeomorphisms. 
Notice that $\MCG(S)=\Homeo^+(S)/\Homeo_0(S).$ When the surface is of infinite-type, the mapping class group is homeomorphic to the Baire space, i.e. $\MCG(S)\cong \mathbb{R}\setminus\mathbb{Q}$. We call the mapping class group as \textit{big mapping class group} if the surface is of infinite-type. We define a particular type of element in $\MCG(S)$ which will be used later. Let $A=\mathbb{S}^1\times[0,1]$ be the annulus. Define the twist map $T:A\rightarrow A$ given by $T(\theta,t)=(\theta+2\pi t, t)$ for all $(\theta,t)\in \mathbb{S}^1\times[0,1]$. Let $\alpha$ be an essential simple closed curve in $S$ (i.e. $\alpha$ is an embedding of $\mathbb{S}^1$, not homotopic to a point or a puncture). Fix a regular neighbourhood $N$ of $\alpha$ and an orientation-preserving homeomorphism $\psi: A\rightarrow N$. Now define $T_{\alpha}: S\rightarrow S$ by 
 \[T_{\alpha}(s)= \left\{
\begin{array}{ll}
\psi\circ T\circ \psi^{-1}(s) & \text{if} \; s\in N \\
s & \text{if}\; s\in S\setminus N.
\end{array}\right. \] 
The map $T_{\alpha}$ is called a \emph{Dehn twist about $\alpha$}. Throughout the paper, we will use left Dehn twist.  Note that a Dehn twist neither depends on $N$ nor on $\alpha$ (up to isotopy). It is an infinite order element of $\MCG(S)$. Now we have the following basic result. For details, we refer the reader to \cite[Chapter 3]{fm12}. 
\begin{lemma}\label{dehnt}\cite[Section 3.3]{fm12} Let  $T_{\alpha}$ be a Dehn twist about $ \alpha$ and $f\in\MCG(S)$, and $m,n\in \N$. Then the following hold:
	\begin{enumerate}[leftmargin=*]
	\item\label{dehnt1} $fT_{\alpha}f^{-1}=T_{f(\alpha)}$. 
	\item\label{dehnt2} $T_{\alpha}^n=T_{\beta}^m$ if and only if $\alpha=\beta$ and $m=n$.
	\end{enumerate}

\end{lemma}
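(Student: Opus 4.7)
\emph{Proof plan.} The plan is to adapt the standard finite-type arguments (as developed in \cite[Chapter 3]{fm12}) to the infinite-type setting. The key observation is that Dehn twists are locally supported, so the relevant computations take place in a compact subsurface of $S$ and are insensitive to the size of $S$.

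For part (\ref{dehnt1}) I invoke the change-of-coordinates principle. Fix the data used to define $T_\alpha$: a regular neighborhood $N$ of $\alpha$ and an orientation-preserving homeomorphism $\psi: A \to N$. Choose a representative homeomorphism $f\in \Homeo^+(S)$ of the given mapping class. Then $f(N)$ is a regular neighborhood of $f(\alpha)$ and $f\circ\psi: A\to f(N)$ is an orientation-preserving parametrization. A direct computation from the definition of a Dehn twist shows that $f T_\alpha f^{-1}$ acts as $(f\psi)\circ T\circ (f\psi)^{-1}$ on $f(N)$ and as the identity outside, which is precisely the formula defining $T_{f(\alpha)}$ relative to this data. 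Since the isotopy class of a Dehn twist does not depend on the choice of regular neighborhood or parametrization, the equality $f T_\alpha f^{-1}=T_{f(\alpha)}$ holds in $\MCG(S)$.

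For part (\ref{dehnt2}) the principal tool is the geometric intersection number. The classical identity
\[
i(T_\alpha^n(\gamma),\gamma) \; = \; |n|\cdot i(\alpha,\gamma)^2
\]
for any essential simple closed curve $\gamma$ with $i(\alpha,\gamma)>0$ already implies that $T_\alpha$ has infinite order. Assume now $T_\alpha^n=T_\beta^m$ in $\MCG(S)$. If $\alpha$ and $\beta$ represent distinct isotopy classes of essential simple closed curves, one selects a test curve $\gamma$ intersecting $\alpha$ (or $\beta$) essentially but not the other, and then compares intersection numbers on both sides; using part (\ref{dehnt1}) to conjugate $T_\beta$ one arrives at incompatible intersection counts, a contradiction. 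Hence $\alpha=\beta$, and the equality $T_\alpha^n=T_\alpha^m$ together with the infinite order of $T_\alpha$ yields $n=m$.

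The only potential obstacle is verifying that these intersection-number arguments, classical in the finite-type setting, remain valid on an infinite-type surface. This is not a genuine difficulty: each step involves only finitely many compact simple closed curves together with their compact supports, so one can always pass to a finite-type subsurface of $S$ that contains the curves $\alpha,\beta,\gamma$ together with the supports of the twists involved, and apply the finite-type result of \cite[Chapter 3]{fm12} verbatim.
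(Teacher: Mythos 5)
The paper gives no proof of this lemma at all --- it simply defers to \cite[Chapter 3]{fm12} --- and your argument is precisely the standard one from that reference (change of coordinates for part (\ref{dehnt1}), the twist--intersection formula $i(T_\alpha^n(\gamma),\gamma)=|n|\,i(\alpha,\gamma)^2$ plus a test curve for part (\ref{dehnt2})), correctly adapted to the infinite-type setting by observing that every step is local. One small caution: rather than literally ``passing to a finite-type subsurface $\Sigma$ and applying the result verbatim'' (which would require knowing that the equality $T_\alpha^n=T_\beta^m$ in $\MCG(S)$ descends to $\MCG(\Sigma)$, i.e.\ an injectivity statement for $\MCG(\Sigma)\to\MCG(S)$), it is cleaner to say, as your main argument in fact does, that the intersection-number computations are carried out in $S$ itself and only the \emph{proofs} of the local facts take place in a compact subsurface.
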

\subsection{Some basic results}

To determine the $R_{\infty}$-property we have to understand the automorphism group of $\MCG(S)$ (resp. $\MCG^{\pm}(S)$). Following the work of Bavard et. al. \cite{bdr17} we will briefly describe the automorphism group of big mapping class groups.

\noindent
\textbf{\textit{Automorphisms:}}
For a group $G$, let $\Aut(G)$ denote the group of all automorphisms of $G$, and $\Inn(G):=\{i_g\mid i_g(x)=gxg^{-1} \;\forall x\in G\}$, the normal subgroup of $\Aut(G)$ consisting of only the inner automorphisms of $G$. The outer automorphism group of $G$, is denoted by $\Out(G):=\Aut(G)/\Inn(G)$. A subgroup $H$ of $G$ is called \emph{characteristic} if $\varphi(H)=H$ for all $\varphi\in \Aut(G)$. 
 By virtue of \cite[Theorem 1.1]{bdr17}, for every $\varphi\in \Aut(\MCG^{\pm}(S))$ there exists $h\in\Homeo(S)$ such that $\varphi(f)=hfh^{-1}$, for all $f\in\MCG^{\pm}(S)$. That is every automorphism of $\MCG^{\pm}(S)$ is inner.  
Now let $\varphi\in \Aut(\MCG(S))$. Since $\MCG(S)$ is an index two subgroup of $\MCG^{\pm}(S)$ again by \cite[Theorem 1.1]{bdr17}, there exists $h\in\Homeo(S)$ such that $\varphi(f)=hfh^{-1}$. Now there are two possibilities for $h$: (a) $h\in\Homeo^+(S)$, or (b) $h\in\Homeo^{-}(S)$. \\
(a) If $h\in\Homeo^+(S)$, then $\varphi(f)=hfh^{-1}=i_{h}(f)$, i.e. $\varphi$ is an inner automorphism of $\MCG(S)$. 

\noindent
(b) If $h\in\Homeo^{-}(S)$, then 
	$\varphi^2(f)=\varphi(hfh^{-1})=h^2fh^{-2}=i_{h^2}(f)$. Thus $\varphi^2=i_{h^2}\in \Inn(\MCG(S))$. 

\noindent		
Therefore $\overline{\varphi}$ is an element of order $1$ or $2$ in $\Out(\MCG(S))$, where $\Aut(\MCG(S))\rightarrow \Out(\MCG(S))$ given by $\varphi\mapsto\overline{\varphi}$. Since $\varphi$ is an arbitrary automorphism, we have  $\Out(\MCG(S))\cong \Z/2\Z\cong\langle\overline{\psi}\mid \psi: f\mapsto hfh^{-1}, h\in\Homeo^{-}(S)\rangle$. We summarize this as follows:
\begin{lemma}\cite[Theorem 1.1, Corollary 1.2]{bdr17}\label{out}
If $S$ is an infinite-type surface then the following hold:
\begin{enumerate}[leftmargin=*]
\item\label{out1} $\Out(\MCG^{\pm}(S))=\{e\}$.
\item\label{out2} $\Out(\MCG(S))\cong \Z/2\Z$.
\item \label{out3} If $G<\MCG(S)$ is a finite index subgroup then $\Out(G)$ is finite.
\item\label{char1} $\MCG(S), \PMCG(S)$ and $\PMCG^{\pm}(S)$ are characteristic subgroups in $\MCG^{\pm}(S)$.
\end{enumerate}
\end{lemma}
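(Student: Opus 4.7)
The plan is to take [BDR17, Theorem 1.1] as a black box and derive all four assertions from it by orientation bookkeeping and routine normal-subgroup arguments.

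For (1), BDR gives every $\varphi\in\Aut(\MCG^{\pm}(S))$ the form $\varphi(f)=hfh^{-1}$ with $h\in\Homeo(S)$; since $\MCG^{\pm}(S)$ contains the isotopy class $[h]$ irrespective of the orientation sign of $h$, such a $\varphi$ is already inner, and $\Out(\MCG^{\pm}(S))$ collapses to the trivial group.

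For (2), I would run the orientation split sketched in the text preceding the lemma. Applying BDR to $\MCG(S)$ realises every $\varphi$ as conjugation by some $h\in\Homeo(S)$; if $h\in\Homeo^+(S)$ then $\varphi$ is inner in $\MCG(S)$, whereas if $h\in\Homeo^-(S)$ then $h^2\in\Homeo^+(S)$, so $\varphi^2=i_{[h^2]}$ is inner. Sending the outer class of $\varphi$ to the sign of $h$ then yields a well-defined homomorphism $\Out(\MCG(S))\to\Z/2\Z$; surjectivity is immediate since $S$ admits an orientation-reversing self-homeomorphism. The main obstacle of the whole lemma is showing this map is injective, that is, that conjugation by an orientation-reversing $h$ is never inner in $\MCG(S)$. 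This reduces to proving that the centraliser of $\MCG(S)$ inside $\Homeo(S)/\Homeo_0(S)$ is trivial, which I would extract from center-triviality of the big mapping class group, applied via its faithful action on isotopy classes of essential simple closed curves together with \lemref{dehnt}.

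For (3), I would invoke [BDR17, Corollary 1.2] directly; conceptually, commensurator rigidity extends any $\varphi\in\Aut(G)$ (after restricting to a deeper characteristic finite-index subgroup of $G$) to an automorphism of $\MCG^{\pm}(S)$, whereupon (1) together with the finite index $[\MCG^{\pm}(S):G]$ bounds $|\Out(G)|$. For (4), since by (1) every automorphism of $\MCG^{\pm}(S)$ is inner, every normal subgroup of $\MCG^{\pm}(S)$ is automatically characteristic, so it suffices to verify normality of the three candidates: $\MCG(S)$ is the kernel of the orientation-sign homomorphism $\MCG^{\pm}(S)\to\Z/2\Z$, $\PMCG^{\pm}(S)$ is the kernel of the action on $\Ends(S)$, and $\PMCG(S)=\MCG(S)\cap\PMCG^{\pm}(S)$ is the intersection of the two normal subgroups above. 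Everything here is bookkeeping once BDR and the centre-triviality input of step (2) are in place.
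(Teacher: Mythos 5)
Your proposal is correct. For parts (1) and (2) it follows the same orientation-bookkeeping route as the discussion the paper gives immediately before the lemma; the paper then delegates the remaining content entirely to the citation [BDR17, Theorem 1.1, Corollary 1.2], whereas you supply arguments for precisely the points that citation is covering. Your key step for (2) --- that conjugation by an orientation-reversing homeomorphism is never inner in $\MCG(S)$ because the centraliser of $\MCG(S)$ in $\MCG^{\pm}(S)$ is trivial, proved via Dehn twists and the Alexander method --- is sound and is the same mechanism the paper later deploys in \lemref{center}; note only a small relabelling: that fact is what makes your sign homomorphism well defined (equivalently, what makes its image nontrivial), whereas injectivity of $\Out(\MCG(S))\to \Z/2\Z$ is the easy direction, since a positive sign forces $h\in\Homeo^+(S)$ and hence $\varphi$ inner. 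Your deduction of (4) from (1) together with normality of the three subgroups (kernel of the orientation sign, kernel of the action on $\Ends(S)$, and their intersection) is clean and more self-contained than the paper's bare citation, and for (3) you, like the paper, ultimately invoke [BDR17, Corollary 1.2]. I see no gaps.
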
  
Suppose that $1\rightarrow N \overset{i}{\rightarrow} G\overset{\pi}{\rightarrow} Q\rightarrow 1$ is an exact sequence of arbitrary groups. If $Q$ is finite and $N$ has infinitely many conjugacy classes, then we \textbf{claim} that $G$ also has infinitely many conjugacy classes.
If $G$ has finitely many conjugacy classes then we will arrive at a contradiction. Since $N$ has infinitely many conjugacy classes there exist infinitely many elements $n_r\in N$ ($r\geq 0$) such that $n_r$ and $n_s$ are not conjugate in $N$ ($r\neq s$) but $n_0$ and $n_r$ are conjugate in $G$ for all $r\geq 0$. So for all $r\geq 1$ there exists $g_r\in G$ such that $n_r=g_rn_0g_r^{-1}$. 
Since $Q(\cong G/N)$ is finite there exist distinct $r,s\in \mathbb{N}$ such that $g_sg_r^{-1}\in N$. Therefore $n_s=g_sn_0g_s^{-1}=g_sg_r^{-1}n_rg_rg_s^{-1}=(g_sg_r^{-1})n_r(g_sg_r^{-1})^{-1}$. This is a contradiction to the assumption that $n_r$ and $n_s$ are not conjugate in $N$. We record this elementary observation as follows: 
\begin{lemma}\label{ntog}
Let $1\rightarrow N \rightarrow G\rightarrow Q\rightarrow 1$ be an exact sequence of groups. If $Q$ is finite and $N$ has infinitely many conjugacy classes then so does $G$.
\end{lemma}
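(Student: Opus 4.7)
The plan is to argue by contradiction, exploiting the normality of $N$ in $G$ together with the finiteness of the index $[G:N]=|Q|$. The key observation is that, since $N$ is normal, the group $G$ acts on $N$ by conjugation, and each $G$-orbit inside $N$ decomposes into $N$-conjugacy classes in a controlled way. I intend to quantify this decomposition and then apply a straightforward counting argument.

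First I would prove the following structural fact: for any $n_0\in N$ and any $g_1,g_2\in G$ lying in the same left coset of $N$, say $g_2=g_1 n$ with $n\in N$, the elements $g_1 n_0 g_1^{-1}$ and $g_2 n_0 g_2^{-1}$ are already $N$-conjugate. Indeed, by normality of $N$ the element $g_1 n g_1^{-1}$ lies in $N$, and conjugating $g_1 n_0 g_1^{-1}$ by $g_1 n g_1^{-1}$ yields $g_1 n n_0 n^{-1} g_1^{-1}=g_2 n_0 g_2^{-1}$. Consequently, each $G$-conjugacy class that meets $N$ is a union of at most $[G:N]=|Q|$ distinct $N$-conjugacy classes.

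From this the lemma follows immediately by contraposition. Suppose $G$ has only finitely many conjugacy classes. Then only finitely many of them can meet $N$, and by the structural fact each such $G$-class accounts for at most $|Q|<\infty$ many $N$-conjugacy classes. Multiplying two finite quantities, the total number of $N$-conjugacy classes must be finite, contradicting the hypothesis on $N$. Equivalently, if one prefers the direct approach, pick representatives $n_0,n_1,n_2,\ldots\in N$ of infinitely many distinct $N$-conjugacy classes that all lie in a single $G$-conjugacy class (these exist by pigeonhole, using that $G$ has only finitely many classes while $N$ has infinitely many), write $n_r=g_r n_0 g_r^{-1}$, and observe that finiteness of $G/N$ forces $g_s g_r^{-1}\in N$ for some $r\neq s$, whence $n_s=(g_s g_r^{-1})n_r(g_s g_r^{-1})^{-1}$, contradicting the choice of the $n_r$.

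There is no substantive obstacle: the argument is a purely formal coset-and-pigeonhole computation, and the hypothesis $|Q|<\infty$ is used only at the final counting step. The main point to articulate carefully is the reduction of $G$-conjugation on $N$ to $N$-conjugation within each coset of $N$, which is where normality of $N$ is essential.
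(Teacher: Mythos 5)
Your proposal is correct, and its second half (the ``direct approach'') is essentially verbatim the paper's own argument: choose infinitely many pairwise non-$N$-conjugate $n_r=g_rn_0g_r^{-1}$ in a single $G$-class, use finiteness of $G/N$ to find $g_sg_r^{-1}\in N$, and derive a contradiction. The preliminary counting version you give first is just a slightly more quantitative packaging of the same coset-and-pigeonhole idea, so there is nothing substantively different to compare.
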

\noindent
The above lemma holds in more generality (see, \cite[Lemma 2.2(ii)]{MS}). Next, we collect the following two important results: 
\begin{lemma}\cite[Lemma 2.1]{MS}\label{qtog}
	Let $1\rightarrow N \rightarrow G\rightarrow Q\rightarrow 1$ be an exact sequence of groups. Suppose that $N$ is a characteristic subgroup of $G$ and $Q$ has the $R_{\infty}$-property. Then $G$ also has the $R_{\infty}$-property.
\end{lemma}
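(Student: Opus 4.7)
The plan is to lift the $R_\infty$-property from the quotient $Q$ back to $G$ via a surjection on Reidemeister sets induced by $\pi$. Fix an arbitrary automorphism $\varphi \in \Aut(G)$. Since $N$ is characteristic in $G$, we have $\varphi(N) = N$, so $\varphi$ descends to a well-defined automorphism $\overline{\varphi} \in \Aut(Q)$ characterized by $\overline{\varphi} \circ \pi = \pi \circ \varphi$. This is the only place where the characteristic hypothesis is used, and it is exactly what makes the entire argument work.

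Next I would show that $\pi$ induces a well-defined surjection on Reidemeister classes
\begin{equation*}
\pi_\ast : \{\varphi\text{-twisted classes in }G\} \longrightarrow \{\overline{\varphi}\text{-twisted classes in }Q\}, \qquad [x]_\varphi \longmapsto [\pi(x)]_{\overline{\varphi}}.
\end{equation*}
For well-definedness, if $y = gx\varphi(g)^{-1}$ in $G$, then applying $\pi$ gives $\pi(y) = \pi(g)\pi(x)\overline{\varphi}(\pi(g))^{-1}$ in $Q$, so the class $[\pi(x)]_{\overline{\varphi}}$ depends only on $[x]_\varphi$. Surjectivity is immediate: any $q \in Q$ equals $\pi(x)$ for some $x \in G$ (because $\pi$ is surjective), and then $\pi_\ast([x]_\varphi) = [q]_{\overline{\varphi}}$.

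Since $Q$ has the $R_\infty$-property, $R(\overline{\varphi}) = \infty$. The surjection $\pi_\ast$ then forces $R(\varphi) \geq R(\overline{\varphi}) = \infty$, and since $\varphi \in \Aut(G)$ was arbitrary, $G$ has the $R_\infty$-property.

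The only subtle point is the well-definedness of $\overline{\varphi}$, which requires $\varphi(N) = N$; without the characteristic hypothesis one cannot even formulate the reduction. Everything else is a direct unravelling of the definition of twisted conjugacy and the surjectivity of $\pi$, so I do not expect a genuine obstacle — the lemma is essentially a functoriality statement for Reidemeister sets under quotients by characteristic subgroups.
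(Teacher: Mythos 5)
Your proof is correct and is exactly the standard argument for this lemma; the paper itself gives no proof, citing \cite[Lemma 2.1]{MS}, and your reduction (characteristic $\Rightarrow$ $\varphi$ descends to $\overline{\varphi}$ on $Q$, then $\pi$ induces a surjection on Reidemeister classes, so $R(\varphi)\geq R(\overline{\varphi})=\infty$) is precisely the argument in that reference. No gaps.
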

\begin{lemma}\cite[Corollary 3.2]{FLT}\label{inner}
	Let $\varphi \in \Aut(G)$ and $i_g \in \mathrm{Inn}(G)$. Then $R(\varphi\circ i_g)=R(\varphi)$. In particular, $R(i_g)=R(\mathrm{Id})$, i.e. the number of inner twisted conjugacy classes in $G$ is equal to the number of conjugacy classes in $G$.
\end{lemma}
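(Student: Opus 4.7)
The plan is to establish an explicit bijection between the $\varphi$-twisted conjugacy classes and the $(\varphi \circ i_g)$-twisted conjugacy classes, induced by right-multiplication by $\varphi(g)$. The guiding computation is that for $h \in G$,
\[
(\varphi \circ i_g)(h)^{-1} \;=\; \varphi(g h g^{-1})^{-1} \;=\; \varphi(g)\,\varphi(h)^{-1}\,\varphi(g)^{-1},
\]
so that $y = h x (\varphi \circ i_g)(h)^{-1}$ if and only if $y\varphi(g) = h\bigl(x\varphi(g)\bigr)\varphi(h)^{-1}$.

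With that identity in hand, first I would define the map $\Phi \colon G \to G$ by $\Phi(x) = x \varphi(g)$. This is clearly a bijection of $G$. Second, I would verify from the displayed identity that $x \sim_{\varphi \circ i_g} y$ in $G$ if and only if $\Phi(x) \sim_\varphi \Phi(y)$ in $G$. Thus $\Phi$ descends to a well-defined bijection between the set of $(\varphi\circ i_g)$-twisted conjugacy classes and the set of $\varphi$-twisted conjugacy classes, proving $R(\varphi \circ i_g) = R(\varphi)$. The special case $\varphi = \mathrm{Id}$ then yields $R(i_g) = R(\mathrm{Id})$, which is exactly the statement that the number of inner twisted conjugacy classes equals the number of ordinary conjugacy classes.

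There is essentially no obstacle here: the argument is a one-line algebraic manipulation once the right map $\Phi$ is guessed. The only thing to be careful about is the direction of the conjugation convention (whether one writes $y = gx\varphi(g)^{-1}$ or $y = \varphi(g)^{-1}xg$), since the paper's convention is the former; with this convention, right multiplication by $\varphi(g)$ is the correct shift, and one should check that the verification goes through in both directions (which it does, because $\Phi$ is a bijection and the equivalence of the two displayed equalities is an iff).
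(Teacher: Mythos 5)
Your proof is correct: the identity $(\varphi\circ i_g)(h)^{-1}=\varphi(g)\varphi(h)^{-1}\varphi(g)^{-1}$ does show that $y=hx(\varphi\circ i_g)(h)^{-1}$ is equivalent to $y\varphi(g)=h\bigl(x\varphi(g)\bigr)\varphi(h)^{-1}$, so right multiplication by $\varphi(g)$ induces a bijection between the $(\varphi\circ i_g)$-twisted classes and the $\varphi$-twisted classes. The paper itself gives no proof --- it simply cites \cite[Corollary 3.2]{FLT} --- and your argument is the standard one-line verification that source uses, so there is nothing further to compare.
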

\noindent
\textbf{\textit{Some geometric results:}}
	Let $G$ be a group acting on a metric space $(X, d)$ by isometries. Let $x\in X$ and $g\in G$. We \textbf{claim} that   $\displaystyle\lim_{n\to\infty}\frac{d(x,g^nx)}{n}$ exists. Suppose that $f:\N \to [0,\infty)$ is a function given by $f(n)=d(x,g^nx)$. Clearly, $f(n+m) = d(x,g^{m+n}x) \leq d(x,g^mx)+d(x,g^nx)= f(m)+f(n)$, for any $m,n\in \N$. Thus, $f$ is subadditive. 
	Now, let $M=\inf \frac{f(n)}{n}$. Then for any $\epsilon>0$, there exists $n>0$ such that $\frac{f(n)}{n}<M+\epsilon$. So, for every $m>n$, we have $m=kn+r$, where $k\in \N$ and $0\leq r<n$. So, $\frac{f(m)}{m}=\frac{f(kn+r)}{m}\leq \frac{f(kn)}{m}+\frac{f(r)}{m} \leq \frac{kn(M+\epsilon)}{m}+\frac{f(r)}{m}$.
	Therefore $\lim_{n\to\infty}\frac{d(x,g^nx)}{n}=M$. This proves the claim. For instance, see \cite[Chapter II.6, Exercise 6.6(1)]{BH}.
	
	The \emph{asymptotic translation length of} $g\in G$ is given by $\tau(g):=\lim_{n\to\infty}\frac{d(x,g^nx)}{n}$. This in itself is of independent interest in geometric group theory. We have the following basic result: 
\begin{lemma}\label{translation-length}
	\begin{enumerate}[leftmargin=*]
		\item $\tau(g)$ does not depend on the choice of $x$.
		\item $\tau(g)=\tau(h^{-1}gh)$ for all $g,h \in G$.
		\item $\tau(g^k)=|k|\tau(g)$ for all $k\in \Z$.
	\end{enumerate}
\end{lemma}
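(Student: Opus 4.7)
The plan is to derive all three properties from two inputs: the isometric action of $G$ on $X$ and the existence of $\tau(g)$ established by the preceding subadditivity argument. There is no essential obstacle here; the work is almost entirely a matter of pushing the isometric action through the triangle inequality in the right way.

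For (1), I would pick two basepoints $x, y \in X$ and apply the triangle inequality to get
$d(x, g^n x) \leq d(x,y) + d(y, g^n y) + d(g^n y, g^n x)$,
noting that $d(g^n y, g^n x) = d(x,y)$ because $g^n$ is an isometry. Dividing by $n$ and letting $n\to\infty$ shows $\tau_x(g) \leq \tau_y(g)$, and the symmetric argument yields the equality. For (2), I would use the identity $(h^{-1}gh)^n = h^{-1} g^n h$ together with the isometry $h$ to rewrite $d(x, (h^{-1}gh)^n x) = d(hx, g^n(hx))$, and then invoke part (1) with basepoint $hx$ to conclude $\tau(h^{-1}gh) = \tau(g)$.

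Finally, for (3), the case $k \geq 0$ reduces via the substitution $m = kn$ to $\tau(g^k) = \lim_n d(x, g^{kn}x)/n = k \cdot \lim_n d(x, g^{kn}x)/(kn) = k\tau(g)$. For $k < 0$, the key reduction is the identity $\tau(g^{-1}) = \tau(g)$, which follows from $d(x, g^{-n} x) = d(g^n x, g^n g^{-n} x) = d(g^n x, x) = d(x, g^n x)$, applying that $g^n$ is an isometry. Composing this with the positive case yields $\tau(g^k) = \tau((g^{-1})^{|k|}) = |k|\tau(g^{-1}) = |k|\tau(g)$ for every $k \in \Z$.
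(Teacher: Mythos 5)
Your proof is correct and follows essentially the same route as the paper: the two-sided triangle inequality plus the isometry of $g^n$ for (1), the identity $(h^{-1}gh)^n = h^{-1}g^nh$ reducing (2) to (1) with basepoint $hx$, and the subsequence substitution $m = kn$ for (3). You even fill in the $k<0$ case explicitly via $\tau(g^{-1}) = \tau(g)$, which the paper leaves as ``similarly.''
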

\begin{proof}
\begin{enumerate}[leftmargin=*]
\item Let $y\in X$ be a point distinct from $x$. Then, we have\\ 
$d(x,g^nx)-d(x,y)-d(g^nx,g^ny)\leq d(y,g^ny)\leq d(y,x)+d(x,g^nx)+d(g^nx,g^ny)$.\\
This implies $\frac{d(x,g^nx)}{n}-2\frac{d(x,y)}{n}\leq \frac{d(y,g^ny)}{n}\leq\frac{d(x,g^nx)}{n}+2\frac{d(x,y)}{n}$.\\ 
Thus, we have $\lim_{n\to\infty}\frac{d(x,g^nx)}{n} =\lim_{n\to\infty}\frac{d(y,g^ny)}{n}$.
\item For any $g, h\in G$, we have $$\tau(h^{-1}gh)= \lim_{n\to\infty}\frac{d(x,(h^{-1}gh)^nx)}{n}=\lim_{n\to\infty}\frac{d(x,h^{-1}g^nhx)}{n}=\lim_{n\to\infty} \frac{d(hx,g^n(hx))}{n}.$$ 
Then by $(1)$, $\tau(h^{-1}gh)=\tau(g)$.
\item Assume that $k>0$, then $$\tau(g^k)=\lim_{n\to\infty}\frac{d(x,(g^k)^nx)}{n}=\lim_{n\to\infty}\frac{kd(x,g^{nk}x)}{nk}=k\lim_{nk\to\infty}\frac{d(x,g^{nk}x)}{nk}=k\tau(g).$$ Similarly one can check this for $k<0$. Hence we have the result.	\end{enumerate}
\end{proof}
\noindent
The above result immediately provides a sufficient condition for a group to have infinitely many conjugacy classes (cf. \cite[Corollary 8.2.G]{gromov}). Recall that an isometry $g$ is called \emph{loxodromic} if $\tau(g)>0$. 
\begin{corollary}\label{infinitecc}
	If a metric space $(X,d)$ admits an isometric $G$-action such that $\tau(g)>0$, then the number of conjugacy classes of $G$ is infinite.
\end{corollary}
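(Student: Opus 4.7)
The plan is to exploit translation length as a conjugacy invariant. Concretely, suppose there exists $g \in G$ with $\tau(g) > 0$ (this is how I read the hypothesis, since $\tau(g) > 0$ for every $g$ would be far too strong; the corollary is meant to say that the existence of a loxodromic element suffices). I would then produce an explicit infinite family of elements, pairwise non-conjugate, by taking powers of $g$.

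First I would observe, via \lemref{translation-length}(3), that $\tau(g^k) = |k|\tau(g)$ for every $k \in \Z$. Since $\tau(g) > 0$, the values $\tau(g), \tau(g^2), \tau(g^3), \ldots$ are pairwise distinct positive real numbers. Next, by \lemref{translation-length}(2), translation length is invariant under conjugation in $G$: if $x = h^{-1}yh$, then $\tau(x) = \tau(y)$. Combining the two observations, if $g^m$ and $g^n$ were conjugate in $G$ with $m, n \geq 1$, then $m\tau(g) = n\tau(g)$, forcing $m = n$. Hence the elements $\{g^n : n \geq 1\}$ lie in pairwise distinct conjugacy classes, and $G$ has infinitely many conjugacy classes.

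The proof is essentially immediate from the preceding lemma, so there is no real obstacle; the only conceptual point worth flagging is that translation length is defined intrinsically on $G$ (independent of the basepoint $x$, by \lemref{translation-length}(1)), which is what makes it a legitimate conjugacy invariant. I would present the argument in three or four lines, emphasizing that this corollary is the mechanism through which the existence of a single loxodromic isometry upgrades to an infinitude of conjugacy classes; this is precisely the leverage that will later be combined with \lemref{ntog} and \lemref{qtog} to reach the $R_\infty$-property for $\MCG(S)$.
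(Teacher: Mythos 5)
Your proposal is correct and follows essentially the same route as the paper: both use \lemref{translation-length} parts (2) and (3) to show that the powers $g^n$ have pairwise distinct translation lengths and hence lie in pairwise distinct conjugacy classes. Your reading of the hypothesis (existence of a single loxodromic element) matches the paper's intent, and your write-up simply spells out the conjugation-invariance step that the paper leaves implicit.
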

\begin{proof}
First, note that the order of $g$ is infinite because $\tau(g)>0$. Now look at the infinite sequence $\{g^i\}_{i\in \mathbb{N}}$ of distinct elements of $G$. By \lemref{translation-length} $g^i$ is not conjugate to $g^j$, for $i\neq j$. Thus, $G$ has infinitely many conjugacy classes.
\end{proof}
\noindent
In order to describe the next set of results we fix some notations.
Let $\varphi\in\Aut(G)$ and $\Z=\langle t\rangle$. Define the following extension of $G$ containing $G$ as a normal subgroup:
$$G_{\varphi}:=\langle G,t\mid tgt^{-1}=\varphi(g)\;  \forall g\rangle=G\rtimes_{\varphi} \Z=\bigsqcup_{n\in\Z} G\cdot t^n.$$ 
Now, if $x$ and $y$ are $\varphi$-twisted conjugate in $G$ then $y=gx\varphi(g^{-1})$ for some $g\in G$. Therefore $y=gxtg^{-1}t^{-1}$ in $G_{\varphi}$ which implies that  $yt=g(xt)g^{-1}$, i.e. $xt$ and $yt$ are conjugate in $G_{\varphi}$.
Conversely, suppose that $xt$ and $yt$ are conjugate in $G_{\varphi}$. Then there exists an element $gt^n$ in $G_{\varphi}$ such that   $yt=(gt^n)(xt)(gt^n)^{-1}=g(t^nxt^{-n})tg^{-1}=g\varphi^n(x)tg^{-1}.$ Thus $y=g\varphi^n(x)(tg^{-1}t^{-1})=g\varphi^n(x)\varphi(g^{-1})$, i.e. $y\sim_{\varphi} \varphi^n(x)$. Also, note that $x$ and $\varphi(x)$ are always $\varphi$-twisted conjugate. Thus, $x\sim_{\varphi}y$.
We summarize this as follows:
\begin{lemma}\cite[Lemma 2.1]{fel10}\label{keylemma}
	Two elements $x$ and $y$ of $G$ are $\varphi$-twisted conjugate if and only if $xt$ and $yt$ of $G_{\varphi}$ are conjugate in $G_{\varphi}$. 
\end{lemma}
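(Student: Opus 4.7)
The plan is to verify both implications by direct calculations inside the semidirect product $G_{\varphi} = G \rtimes_{\varphi} \Z$, using the defining relation $tgt^{-1} = \varphi(g)$ (equivalently $tg = \varphi(g)t$ and $gt^{-1} = t^{-1}\varphi(g)$) and the normal form $G_{\varphi} = \bigsqcup_{n\in\Z} G\cdot t^n$.

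For the forward direction, suppose $y = gx\varphi(g)^{-1}$ for some $g\in G$. I would view $g$ as an element of $G_\varphi$ and compute $g(xt)g^{-1}$: pushing the rightmost $g^{-1}$ past $t$ using the commutation relation produces $gx\varphi(g)^{-1}t$, which equals $yt$. Hence $xt$ and $yt$ are conjugate in $G_\varphi$ by $g$.

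For the reverse direction, suppose $h(xt)h^{-1} = yt$ for some $h\in G_\varphi$. By the normal form, write $h = g t^n$ with $g\in G$ and $n\in\Z$. Expanding $gt^n(xt)t^{-n}g^{-1}$, I would push the $t^n$ through $x$ using $t^n x = \varphi^n(x)t^n$, so the expression collapses to $g\varphi^n(x)\,t\,g^{-1}$; pushing this $t$ past $g^{-1}$ then gives $g\varphi^n(x)\varphi(g)^{-1}\cdot t$. Comparing $G$-components against $yt$ yields $y = g\varphi^n(x)\varphi(g)^{-1}$, which says $\varphi^n(x) \sim_{\varphi} y$.

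The one point requiring care — the only real obstacle — is that this last conclusion gives $\varphi^n(x)\sim_\varphi y$ rather than the desired $x\sim_\varphi y$. To close the gap, I would observe that $x$ and $\varphi(x)$ are always $\varphi$-twisted conjugate: taking $g = x^{-1}$ one has $x^{-1}\cdot x\cdot \varphi(x^{-1})^{-1} = \varphi(x)$. Iterating this (together with the fact that $\sim_\varphi$ is an equivalence relation) shows $x \sim_\varphi \varphi^n(x)$ for every $n\in\Z$, and then transitivity with $\varphi^n(x)\sim_\varphi y$ yields $x \sim_\varphi y$, completing the proof.
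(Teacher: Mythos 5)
Your proposal is correct and follows essentially the same route as the paper's own proof: a direct computation for the forward direction, the normal form $gt^n$ for the converse yielding $y\sim_\varphi\varphi^n(x)$, and the observation that $x\sim_\varphi\varphi(x)$ (for which you helpfully supply the explicit witness $g=x^{-1}$, left implicit in the paper) to close the gap via transitivity.
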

\noindent
This gives an immediate corollary which will be very useful later.
\begin{corollary}\label{bijection}
	The map $x\mapsto xt$ gives a bijection between the set of $\varphi$-twisted conjugacy classes of $G$ and the set of conjugacy classes of $G_{\varphi}$ contained in the coset $G\cdot t$. That is  $R(\varphi)$ is the number of conjugacy classes  of $G_{\varphi}$ in the coset $G\cdot t$ of $G$.
\end{corollary}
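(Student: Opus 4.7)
The plan is to observe that this corollary is essentially a direct repackaging of \lemref{keylemma}, so the proof needs little more than carefully setting up the map and checking it is well-defined and surjective. I would define a map
\[
\Phi : \{\text{$\varphi$-twisted conjugacy classes of } G\} \longrightarrow \{\text{conjugacy classes of } G_\varphi \text{ contained in } G\cdot t\}
\]
by $\Phi([x]_\varphi) = [xt]_{G_\varphi}$, and show that it is a bijection.

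The first thing to verify is that $\Phi$ actually lands where claimed: since $G$ is normal in $G_\varphi$ with $G_\varphi/G \cong \Z$, conjugation by any element of $G_\varphi$ preserves the coset decomposition, so the conjugacy class of $xt$ in $G_\varphi$ is contained in $G\cdot t$. This ensures the target set is described correctly. Well-definedness and injectivity of $\Phi$ now follow immediately from \lemref{keylemma}: $x\sim_\varphi y$ in $G$ if and only if $xt$ and $yt$ are conjugate in $G_\varphi$.

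For surjectivity, take any conjugacy class $C$ of $G_\varphi$ contained in $G\cdot t$. Every element of $G\cdot t$ has the unique form $xt$ for some $x\in G$, so picking a representative gives $C = [xt]_{G_\varphi} = \Phi([x]_\varphi)$. This establishes the bijection, and the counting statement $R(\varphi) = \#\{\text{conjugacy classes of } G_\varphi \text{ in } G\cdot t\}$ is then immediate from the definition of $R(\varphi)$.

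There is no real obstacle here; the only subtlety worth flagging is the normality argument that guarantees conjugacy classes meeting $G\cdot t$ stay inside $G\cdot t$, which is what makes the statement meaningful in the first place. All of the genuine content sits in \lemref{keylemma}, and the corollary merely reorganizes it into a form that will be convenient for counting $\varphi$-twisted conjugacy classes via ordinary conjugacy classes in the semidirect product $G_\varphi = G\rtimes_\varphi \Z$.
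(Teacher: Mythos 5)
Your proof is correct and follows exactly the route the paper intends: the paper states this corollary without proof as an immediate consequence of Lemma~\ref{keylemma}, and your argument (well-definedness and injectivity from the lemma, surjectivity from the coset decomposition, and the observation that normality of $G$ keeps conjugacy classes inside $G\cdot t$) is just a careful spelling-out of that same reduction.
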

In a more general set-up we have the following result due to T. Delzant (cf. \cite[Lemma 6.3]{fg10}, \cite[Lemma 3.4]{ll00}). Let $G$ be a group acting on a hyperbolic metric space $X$ by isometries. The limit set $\Lambda(G)$ of $G$ is defined as $\Lambda(G):=\overline{Gx}\cap \partial X$ (for some $x\in X$), where $\partial X$ denotes the Gromov boundary of $X$. The action is called \emph{non-elementary} if $|\Lambda(G)|=\infty$. Note that if the action is non-elementary then there exists a loxodromic isometry. For details, see \cite[Section 8.2.D]{gromov}.
\begin{lemma}\cite[Lemma 3.4]{ll00}\label{delzant}
	Let $N$ be a normal subgroup of $G$ such that $G/N$ is abelian. If $G$ acts on a hyperbolic space non-elementarily, then any coset of $N$ contains infinitely many conjugacy classes of $G$.
\end{lemma}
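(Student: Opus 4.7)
My plan is to take the translation length function $\tau$ from \lemref{translation-length} as the key conjugacy invariant. Because $G/N$ is abelian, conjugation in $G$ preserves each coset of $N$ (the images of $ghg^{-1}$ and $h$ in $G/N$ coincide), so $hN$ is a union of entire $G$-conjugacy classes. Since $\tau$ is invariant under conjugation by \lemref{translation-length}(2), it suffices to exhibit a sequence of elements in $hN$ whose translation lengths are unbounded.

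First, I would verify that $N$ is a nontrivial normal subgroup whose own action on $X$ is non-elementary. A group acting non-elementarily on a hyperbolic space cannot be abelian (any two commuting loxodromic isometries share their fixed-point pair, so the limit set would be finite), hence $[G,G]\neq\{e\}$; since $G/N$ is abelian, $[G,G]\subseteq N$, so $N$ is nontrivial. The standard fact that a nontrivial normal subgroup of a non-elementary group on a hyperbolic space inherits a non-elementary action then yields two independent loxodromic elements $a,b\in N$ with disjoint pairs of fixed points $\{a^{\pm}\},\{b^{\pm}\}\subset\partial X$.

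Now fix $h\in G$. By Klein's ping-pong, after replacing $a,b$ with sufficiently high powers, $\langle a,b\rangle$ is free of rank two and the element $c_k := a^k b^k \in N$ is loxodromic with $\tau(c_k)\to\infty$ as $k\to\infty$; its attracting and repelling endpoints converge to $a^+$ and $b^-$ respectively. Then $hc_k\in hN$, and I would argue $\tau(hc_k)\to\infty$ via the following hyperbolic-geometric principle: for loxodromic $c$ whose translation length dwarfs $d(x_0,hx_0)$, the element $hc$ is again loxodromic and its translation length differs from $\tau(c)$ by at most a constant depending on $d(x_0,hx_0)$ and the hyperbolicity constant $\delta$, provided $h$ does not interchange the endpoints of $c$ at infinity. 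This degenerate case can be excluded by replacing $a,b$ with conjugates by suitable elements of $N$, since the orbit of $(a^+,b^-)$ under the non-elementary action of $N$ on $\partial X$ is infinite and thus avoids the finitely many configurations that $h$ could swap.

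The main obstacle I expect is the quantitative control of $\tau(hc_k)$ in terms of $\tau(c_k)$: in a general $\delta$-hyperbolic space this requires a careful argument using the North-South dynamics of $c_k$ near its axis, together with the observation that left-multiplication by $h$ is only a bounded perturbation at the basepoint. Once $\tau(hc_k)\to\infty$ is established, the values $\tau(hc_k)$ are eventually pairwise distinct, so $hN$ meets infinitely many $G$-conjugacy classes and the lemma follows.
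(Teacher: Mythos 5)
First, a point of reference: the paper does not prove this lemma at all --- it is imported verbatim from \cite[Lemma 3.4]{ll00}, where it is attributed to T.~Delzant --- so your attempt can only be measured against the standard argument. Your overall strategy is exactly that standard argument: since $G/N$ is abelian, $[G,G]\subseteq N$ and each coset $hN$ is a union of $G$-conjugacy classes; translation length is a conjugacy invariant by \lemref{translation-length}; hence it suffices to exhibit elements of $hN$ with arbitrarily large (hence pairwise distinct) translation lengths. That reduction is correct and is the right way to organize the proof.

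The proof is, however, genuinely incomplete at its central step, and one auxiliary claim is false as stated. (i) The ``standard fact'' that a nontrivial normal subgroup of a non-elementarily acting group inherits a non-elementary action fails in general (a finite normal subgroup can act trivially). What rescues you here is that your $N$ contains $[G,G]$, which contains loxodromic elements (e.g.\ nontrivial commutators of ping-pong generators), and a normal subgroup containing a loxodromic has the same limit set as $G$; this is the argument that should appear. (ii) More seriously, the pivotal claim $\tau(hc_k)\to\infty$ rests on an unproven ``principle'' whose hypothesis (``$h$ does not interchange the endpoints of $c$ at infinity'') is not the condition actually needed, and whose verification (``finitely many configurations that $h$ could swap'') is unjustified --- the set of pairs interchanged by $h$ can be infinite. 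The correct route is quantitative: one has $d(x_0,hc_kx_0)\ge d(hx_0,hc_kx_0)-d(x_0,hx_0)=d(x_0,c_kx_0)-d(x_0,hx_0)\to\infty$, and then one applies the standard criterion that an isometry $g$ with large displacement and bounded Gromov product $\bigl(gx_0\cdot g^{-1}x_0\bigr)_{x_0}$ is loxodromic with $\tau(g)\ge d(x_0,gx_0)-2\bigl(gx_0\cdot g^{-1}x_0\bigr)_{x_0}-O(\delta)$. Since $hc_kx_0\to h(a^+)$ and $(hc_k)^{-1}x_0\to b^-$, those Gromov products remain bounded as soon as $h(a^+)\neq b^-$, a condition easily arranged by varying $b$ among three pairwise independent loxodromics of $N$. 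Until this local-to-global estimate is actually carried out or quoted, the proof has a hole precisely where all the hyperbolic geometry lives. (A simplification: you do not need $c_k=a^kb^k$; the elements $ha^k$ already lie in $hN$ and the same estimate applies to them once $h(a^+)\neq a^-$.)
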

\noindent
Now we recall the Alexander method for infinite-type surfaces.
\begin{lemma}\cite[Corollary 1.2]{hmv19}\label{alexander}
	Suppose that $S$ is an infinite-type surface. If $f\in \MCG^{\pm}(S)$ fixes each essential simple closed curve of $S$ then $f=\Id$.
\end{lemma}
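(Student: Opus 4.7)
The plan is to reduce the statement to the classical Alexander method for finite-type surfaces by means of an exhaustion of $S$. Choose a principal exhaustion $S_1\subset S_2\subset\cdots$ of $S$ by connected, compact, finite-type subsurfaces with $\bigcup_{n\geq 1} S_n = S$, arranged so that every boundary component of every $S_n$ is an essential simple closed curve of $S$, and so that each $S_n$ has sufficient complexity to admit a filling collection of essential simple closed curves. Such exhaustions exist for any infinite-type $S$ by a standard construction using a proper Morse exhaustion and the classification of the ends.

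Since $f$ fixes the isotopy class of every essential simple closed curve of $S$, it fixes in particular the isotopy class of each boundary component of $S_n$. A standard change-of-coordinates argument then allows me to replace $f$ inside its isotopy class by a homeomorphism $f_n$ with $f_n(S_n)=S_n$ setwise. The induced class $[f_n|_{S_n}]\in\MCG^{\pm}(S_n)$ then fixes the isotopy class of every essential simple closed curve of $S_n$, because any essential simple closed curve of $S_n$ that is not peripheral in $S_n$ remains essential in $S$. Applying the classical Alexander method for compact surfaces, $[f_n|_{S_n}]$ is expressible as a product of Dehn twists about the boundary curves of $S_n$.

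To rule out such boundary twists, fix a boundary component $c$ of $S_n$. For $m>n$ large enough, $c$ lies in the interior of $S_m$ and admits an essential simple closed curve $\gamma\subset S_m$ meeting $c$ transversely. By \lemref{dehnt}, a nontrivial Dehn twist about $c$ strictly changes the isotopy class of $\gamma$, contradicting the hypothesis. Hence $[f_n|_{S_n}]=\Id$ in $\MCG^{\pm}(S_n)$ for every $n$.

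Finally, I would assemble these local identifications into a global isotopy from $f$ to $\Id_S$. Since every compact subset of $S$ lies in some $S_n$, a diagonal argument using the compact-open topology on $\Homeo(S)$ upgrades the mapping-class-level triviality on each $S_n$ to an ambient isotopy of $S$, giving $f=\Id$ in $\MCG^{\pm}(S)$. The principal obstacle is precisely this last step: promoting local isotopies-to-the-identity on an exhaustion to a single global isotopy on the non-compact surface $S$ requires careful control of the isotopies near the ends so that the resulting ambient isotopy is continuous in the compact-open topology.
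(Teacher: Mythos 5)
First, be aware that the paper offers no proof of this lemma at all: it is quoted verbatim as \cite[Corollary 1.2]{hmv19}, so you are re-proving an imported result rather than matching an argument in the text. Your exhaustion strategy is indeed the skeleton of the argument in that reference, but as written it has a genuine gap --- and it is precisely the one you flag yourself in the last paragraph. Knowing that $[f|_{S_n}]=\Id$ in $\MCG^{\pm}(S_n)$ for every $n$ does \emph{not} formally yield $[f]=\Id$ in $\MCG^{\pm}(S)$: the isotopies to the identity on the various $S_n$ are a priori unrelated, and to build a single ambient isotopy one must choose them compatibly. The obstruction at each inductive stage lives in the kernel of $\MCG(S_{m}\setminus \mathrm{int}\,S_n,\partial)\to\MCG(S_m)$, and the complementary pieces $S_{m}\setminus \mathrm{int}\,S_n$ are typically of very low complexity (pairs of pants, annuli) where there are no essential non-peripheral curves to apply any Alexander method to, and where the relative mapping class group is a nontrivial group of boundary twists. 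Controlling these twists coherently along the entire exhaustion --- not just one boundary curve at a time as in your step ruling out $T_c$ --- is exactly the content of the infinite-type Alexander method, so deferring it to ``a diagonal argument'' leaves the hardest part of the proof unproved.

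Two smaller but real issues: (i) the ``classical Alexander method'' for a compact surface with boundary does not say that fixing all isotopy classes of essential simple closed curves forces a product of boundary twists --- the precise statement (\cite[Prop.~2.8]{fm12}) requires a filling system including \emph{arcs}, and there are exceptional low-complexity cases (e.g.\ hyperelliptic involutions) that fix every curve class; your ``sufficient complexity'' hypothesis gestures at this but does not dispose of it, since the action on unoriented curves can have nontrivial kernel even for some larger surfaces. (ii) The element $f$ lies in $\MCG^{\pm}(S)$, so you must also rule out orientation-reversing classes that fix every curve; this needs a separate argument (for instance via the effect on oriented intersection patterns of a configuration of curves), and is not covered by the orientation-preserving Alexander method you invoke. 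I would recommend either citing \cite[Corollary 1.2]{hmv19} as the paper does, or, if you want a self-contained proof, carrying out the inductive construction of nested isotopies explicitly.
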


\begin{lemma}\label{center}
	If $S$ is a connected infinite-type surface without boundary then the centers $Z(\MCG^{\pm}(S))=\{\Id\}=Z(\MCG(S))$.
\end{lemma}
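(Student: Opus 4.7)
The plan is to apply the Alexander method (\lemref{alexander}) after observing that any central element must fix every essential simple closed curve, as a consequence of the Dehn twist identities in \lemref{dehnt}.

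First I would take an arbitrary $f \in Z(\MCG^{\pm}(S))$ and an arbitrary essential simple closed curve $\alpha$ in $S$. Since the Dehn twist $T_\alpha$ is orientation-preserving, it lies in $\MCG(S)\subset \MCG^{\pm}(S)$, so by centrality $f T_\alpha f^{-1} = T_\alpha$. Combining this with \lemref{dehnt}\eqref{dehnt1}, which gives $f T_\alpha f^{-1} = T_{f(\alpha)}$, we obtain $T_{f(\alpha)} = T_\alpha$. Applying \lemref{dehnt}\eqref{dehnt2} with $m = n = 1$ then forces $f(\alpha) = \alpha$ as isotopy classes of essential simple closed curves. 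Because $\alpha$ was arbitrary, $f$ fixes every essential simple closed curve, and \lemref{alexander} yields $f = \Id$. This proves $Z(\MCG^{\pm}(S)) = \{\Id\}$.

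The statement for $\MCG(S)$ follows by exactly the same argument: if $f \in Z(\MCG(S))$, then $f$ commutes with every Dehn twist $T_\alpha$ (as all such twists are orientation-preserving and therefore lie in $\MCG(S)$), so the reasoning above shows $f(\alpha) = \alpha$ for every essential simple closed curve, and \lemref{alexander}, which applies to the ambient group $\MCG^{\pm}(S)\supset \MCG(S)$, gives $f = \Id$.

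No serious obstacle is expected; the only point worth checking is that \lemref{alexander}, stated for $\MCG^{\pm}(S)$, is legitimately applicable to elements of the index-two subgroup $\MCG(S)$, which is immediate from the inclusion $\MCG(S) \subset \MCG^{\pm}(S)$.
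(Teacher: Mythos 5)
Your proof is correct and follows essentially the same route as the paper: both use the Dehn twist identities of \lemref{dehnt} to show a central element fixes every essential simple closed curve and then invoke the Alexander method (\lemref{alexander}); the paper merely phrases it as a contradiction while you argue directly, and the paper cites Lanier--Loving for the $\MCG(S)$ case whereas you rerun the same argument.
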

\begin{proof}
	Lanier and Loving \cite[Proposition 2]{ll20} proved that $\MCG(S)$ is centerless. 
	In fact by the same line of argument, using the Alexander method, we prove that $Z(\MCG^{\pm}(S))=\{\Id\}$.
	If possible suppose that $\Id\neq f\in Z(\MCG^{\pm}(S))$. Then by \lemref{alexander} there exists a curve $\gamma$ such that $f(\gamma)\neq \gamma$. Now let $T_{\gamma}\in \MCG^{\pm}(S)$ be a Dehn twist about $\gamma$. Then $fT_{\gamma}f^{-1}=T_{\gamma}$ as $f$ is central. Therefore $T_{f(\gamma)}=T_{\gamma}$, which in turn implies that $f(\gamma)=\gamma$ (by \lemref{dehnt}). We get a contradiction to the fact that $f(\gamma)\neq \gamma$. Therefore the (extended) big mapping class group $\MCG^{\pm}(S)$ is also  centerless.
\end{proof}
\noindent
We end this section with the following result. 
Let $C$ be a Cantor space.
By a result of Rubin \cite[p. 492, (6)]{rubin}, every automorphism of $\Homeo(C)$ is inner\footnote{This result was brought to our notice by Yves de Cornulier}. Also, note that 
$\Homeo(C)$ has infinitely many (more precisely uncountably many) conjugacy classes (see \cite[Section 5, Theorem IV]{and58}). Therefore by \lemref{inner}, $\Homeo(C)$ has the $R_{\infty}$-property. We record this discussion as follows:
\begin{lemma}\label{cantorr}
	If $X$ is a topological space homeomorphic to the Cantor space, then $\Homeo(X)$ possesses the $R_{\infty}$-property.
\end{lemma}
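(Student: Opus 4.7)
The plan is straightforward because the two external inputs -- Rubin's characterization of automorphisms and Anderson's count of conjugacy classes -- have already been cited just before the statement. First I would reduce to the model Cantor space $C$: any homeomorphism $X \to C$ induces an isomorphism $\Homeo(X) \cong \Homeo(C)$ by conjugation, and the $R_\infty$-property is clearly invariant under group isomorphisms (a bijection between the underlying groups transports both the automorphism and the equivalence relation $\sim_\varphi$). So it suffices to verify $R(\varphi) = \infty$ for every $\varphi \in \Aut(\Homeo(C))$.

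Given such a $\varphi$, I would appeal to Rubin's theorem to write $\varphi = i_h$ for some $h \in \Homeo(C)$. Then \lemref{inner} gives
\[
R(\varphi) \;=\; R(i_h) \;=\; R(\Id),
\]
and $R(\Id)$ is by definition the cardinality of the set of ordinary conjugacy classes of $\Homeo(C)$. Anderson's theorem then supplies that this set is (in fact uncountably) infinite, which finishes the argument.

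The only point that requires a moment of care is the initial invariance remark, but it is immediate: if $\psi \colon \Homeo(X) \to \Homeo(C)$ is an isomorphism and $\varphi \in \Aut(\Homeo(X))$, then the bijection $x \mapsto \psi(x)$ matches the $\varphi$-twisted conjugacy classes of $\Homeo(X)$ with the $\psi\varphi\psi^{-1}$-twisted conjugacy classes of $\Homeo(C)$. So there is no genuine obstacle here -- the proof is simply an assembly of Rubin's theorem, Anderson's theorem, and \lemref{inner}, with the reduction $X \cong C$ at the front.
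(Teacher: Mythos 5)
Your proposal is correct and is essentially the paper's own argument: the lemma is proved in the discussion immediately preceding its statement by combining Rubin's theorem (every automorphism of $\Homeo(C)$ is inner), Anderson's result that $\Homeo(C)$ has uncountably many conjugacy classes, and \lemref{inner}. The explicit reduction from $X$ to the model Cantor space $C$ is a harmless extra detail the paper leaves implicit.
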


\section{Main results}\label{results}
\noindent
We begin with the following basic result.
\begin{theorem}\label{mainthm1}
Let $S$ be a connected orientable infinite-type surface without boundary. Then the $($extended$)$ big mapping class group $\MCG^{\pm}(S)$ has the $R_{\infty}$-property.
\end{theorem}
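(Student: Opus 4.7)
The plan is to reduce the statement, via the automorphism computation of Bavard--Dowdall--Rafi, to the assertion that $\MCG^{\pm}(S)$ has infinitely many ordinary conjugacy classes, and then exhibit an explicit family via powers of a Dehn twist.

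First I would invoke \lemref{out}\eqref{out1} to conclude that $\Out(\MCG^{\pm}(S)) = \{e\}$, so every $\varphi \in \Aut(\MCG^{\pm}(S))$ is an inner automorphism $i_h$ for some $h \in \MCG^{\pm}(S)$. By \lemref{inner}, $R(\varphi) = R(i_h) = R(\Id)$, which equals the number of ordinary conjugacy classes of $\MCG^{\pm}(S)$. Hence it suffices to prove that $\MCG^{\pm}(S)$ has infinitely many conjugacy classes, and this single count will simultaneously handle every automorphism.

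For the conjugacy count, I would fix an essential simple closed curve $\alpha \subset S$ (such a curve exists because $S$ is an infinite-type surface) and consider the infinite family of Dehn twist powers $\{T_\alpha^n\}_{n \in \mathbb{N}}$ in $\MCG^{\pm}(S)$. If $T_\alpha^n$ were conjugate to $T_\alpha^m$ via some $f \in \MCG^{\pm}(S)$, then \lemref{dehnt}\eqref{dehnt1} (applied to the appropriate power) gives $T_{f(\alpha)}^n = f T_\alpha^n f^{-1} = T_\alpha^m$, and \lemref{dehnt}\eqref{dehnt2} forces $f(\alpha) = \alpha$ and $n = m$. Thus the family $\{T_\alpha^n\}_{n \in \mathbb{N}}$ represents infinitely many distinct conjugacy classes of $\MCG^{\pm}(S)$.

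Combining the two steps: every automorphism $\varphi$ of $\MCG^{\pm}(S)$ is inner, and every inner automorphism has Reidemeister number equal to the number of conjugacy classes, which is infinite. Therefore $R(\varphi) = \infty$ for all $\varphi \in \Aut(\MCG^{\pm}(S))$, establishing the $R_\infty$-property. No serious obstacle is expected here, since all the heavy lifting (the Bavard--Dowdall--Rafi theorem on automorphisms, and the Alexander-method style rigidity of Dehn twists) has already been quoted as \lemref{out} and \lemref{dehnt}; the proof is essentially a bookkeeping argument assembling these inputs.
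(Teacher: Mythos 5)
Your overall strategy coincides with the paper's: both reduce the theorem, via \lemref{out}\eqref{out1} and \lemref{inner}, to showing that $\MCG^{\pm}(S)$ has infinitely many ordinary conjugacy classes, and both exhibit those classes using powers of a Dehn twist together with \lemref{dehnt}. The one place where your argument is not quite right is the conjugacy count itself: you apply \lemref{dehnt}\eqref{dehnt1} to an arbitrary $f\in\MCG^{\pm}(S)$, but that lemma is stated (and is only true) for $f\in\MCG(S)$. For an orientation-reversing $f$ the correct relation is $fT_{\alpha}f^{-1}=T_{f(\alpha)}^{-1}$, so conjugating $T_{\alpha}^{n}$ by such an $f$ yields $T_{f(\alpha)}^{-n}$, not $T_{f(\alpha)}^{n}$. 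The conclusion you want still holds --- $T_{f(\alpha)}^{-n}=T_{\alpha}^{m}$ is impossible for positive $m,n$ --- but to assert that you need a version of \lemref{dehnt}\eqref{dehnt2} covering negative exponents, which the paper does not state. As written, the claim ``$fT_{\alpha}^{n}f^{-1}=T_{f(\alpha)}^{n}$ for all $f\in\MCG^{\pm}(S)$'' is false.

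The paper sidesteps this issue entirely: it runs the Dehn-twist argument inside $\MCG(S)$, where \lemref{dehnt} applies verbatim, concludes that $\MCG(S)$ has infinitely many conjugacy classes, and then transfers this to $\MCG^{\pm}(S)$ via the index-two exact sequence $1\rightarrow\MCG(S)\rightarrow\MCG^{\pm}(S)\rightarrow\Z/2\Z\rightarrow 1$ together with \lemref{ntog}. You should either adopt that route or explicitly treat the orientation-reversing conjugators as above; with that one repair your proof goes through and is otherwise the same as the paper's.
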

\begin{proof}
Let $T_{\alpha}$ be a Dehn twist (with respect to an essential simple closed curve $\alpha$). Then $T_{\alpha}\in \MCG(S)$ is an infinite order element. Consider the infinite sequence $\{T_{\alpha}^n\mid n\in \N\}$ of distinct elements of $\MCG(S)$. For distinct $m,n\in \N$ if $T_{\alpha}^n$ is conjugate to $T_{\alpha}^m$ in $\MCG(S)$, then $fT_{\alpha}^nf^{-1}=T_{\alpha}^m$ for some $f\in \MCG(S)$. This implies that $T_{f(\alpha)}^n=T_{\alpha}^m$ (by \lemref{dehnt}\eqref{dehnt1}). Therefore, by \lemref{dehnt}\eqref{dehnt2} we have $f(\alpha)=\alpha$ and $m=n$, which is a contradiction. Hence $\MCG(S)$ has infinitely many conjugacy classes (of representatives $T_{\alpha}^n$ for each $n\in \N$). 
Now, look at the following short exact sequence of groups:
\[1\rightarrow\MCG(S)\rightarrow\MCG^{\pm}(S)\rightarrow\Z/2\Z\rightarrow 1.\]
Then by \lemref{ntog} $\MCG^{\pm}(S)$ has infinitely many conjugacy classes. Therefore by \lemref{inner} and \lemref{out}\eqref{out1} it follows that the (extended) big mapping class group $\MCG^{\pm}(S)$ possesses the $R_{\infty}$-property.
\end{proof}
\noindent
There is an important well-studied hyperbolic space (namely the curve graph $\mathcal{C}(S)$) on which the mapping class group of a finite-type surface $S$ acts non-elementarily. But for infinite-type surfaces there is no one such space on which a big mapping class group acts non-elementarily. So we shall deal with this case-by-case.
For the rest of the paper, we will only consider the big mapping class group $\MCG(S)$.
\subsection{Surfaces with non-displaceable subsurfaces}
A finite-type connected subsurface $\Sigma\subset S$ is called \textit{non-displaceable} if $f(\Sigma)\cap \Sigma\neq \emptyset$ for all $ f\in \Homeo(S)$.
In \cite[Theorem 3.13]{rafi}, the authors also gave an algebraic characterisation which says the following: $\Sigma\subset S$ is non-displaceable if and only if $\MCG(S)$ contains a non-trivial normal free subgroup. It follows from the classification of surfaces that there are uncountably many such infinite-type surfaces.
\begin{example}\label{ex}
	\begin{enumerate}[leftmargin=*]
		\item\label{finitegenus}
		Let $0<\text{Genus}(S)<\infty$. By the classification of surfaces in \cite{richards}, $S$ can be realized as $\Sigma-E$, where $E$ is a totally disconnected closed subset of $\Sigma$ homeomorphic to the space of ends $\Ends(S)$, satisfying $\text{Genus}(S)=\text{Genus}(\Sigma)$. 
		Then $\Sigma$ is a non-displaceable subsurface of $S$ as $\Sigma$ contains non-separating curves but $S-\Sigma$ does not. (See \cite[Example 2.4]{mr20}.)
		\item Let $S:=\Omega_n\;(n\geq 3)$ be an infinite genus surface with exactly $n$ non-planar ends. Then $S$ contains a non-displaceable subsurface.
	\end{enumerate} 
\end{example}
\noindent
For more examples, the reader is referred to \cite{mr20}. In order to state and prove one of our main theorems, we need another piece of information. There is a celebrated construction of quasi-trees of metric spaces (popularly known as \emph{BBF family for a group $G$}) developed by Bestvina, Bromberg and Fujiwara. We will not be describing this here but we refer the interested readers to \cite{BBF}. However, using this machinery, in \cite{rafi} the authors proved the following important result which will be sufficient for our purpose. 
\begin{lemma}\cite[Theorem 2.9]{rafi}\label{hqr}
Let $S$ be a connected orientable infinite-type surface which contains a connected non-displaceable subsurface $\Sigma$ of finite-type. Then there exists an unbounded hyperbolic space $X$  equipped with a $($continuous$)$ non-elementary isometric action of $\MCG(S)$ $($resp. $\MCG^{\pm}(S))$.
\end{lemma}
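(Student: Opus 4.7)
The plan is to apply the Bestvina--Bromberg--Fujiwara (BBF) quasi-tree-of-spaces construction to the $\MCG(S)$-orbit of the finite-type non-displaceable subsurface $\Sigma$. First I would set $\mathbb{Y} := \MCG(S) \cdot \Sigma$, so every $Y \in \mathbb{Y}$ is homeomorphic to $\Sigma$ and hence of finite type. To each $Y$ I would attach the curve graph $\mathcal{C}(Y)$, which is Gromov-hyperbolic with a hyperbolicity constant independent of $Y$ (by uniform hyperbolicity of curve graphs, since all $Y \in \mathbb{Y}$ have the same topological type). Between any two distinct $Y, Z \in \mathbb{Y}$ one has the standard Masur--Minsky subsurface projection $\pi_Y(Z) \subset \mathcal{C}(Y)$, obtained by surgering the essential part of $\partial Z \cap Y$ into simple closed curves of $Y$.

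Next I would verify the BBF projection axioms for the system $\{\mathcal{C}(Y),\ \pi_{YZ}\}_{Y,Z \in \mathbb{Y}}$: uniformly bounded projections, the Behrstock inequality, and bounded geodesic image. These are the infinite-type analogues of the Masur--Minsky estimates, and the crucial point is that non-displaceability of $\Sigma$ forces every pair $Y, Z \in \mathbb{Y}$ to intersect nontrivially, so the projections are well defined and the finite-type arguments carry through verbatim. The $\MCG(S)$-action on $\mathbb{Y}$ by $g \cdot Y = g(Y)$ is manifestly equivariant with respect to these projections. Applying the BBF construction then produces a hyperbolic space $X = \mathcal{C}_K(\mathbb{Y})$ on which $\MCG(S)$ acts by isometries; continuity of this action with respect to the natural topology on $\MCG(S)$ follows because subsurface projections vary continuously under isotopy of representative homeomorphisms.

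Unboundedness of $X$ follows from the quasi-isometric embedding $\mathcal{C}(\Sigma) \hookrightarrow X$ provided by the BBF construction, combined with unboundedness of $\mathcal{C}(\Sigma)$ (enlarging $\Sigma$ if needed to have enough complexity while preserving non-displaceability; in degenerate cases one replaces $\mathcal{C}$ by the arc graph or annular projection). For non-elementarity, I would take a pseudo-Anosov $\phi$ of $\Sigma$ extended by the identity on $S \setminus \Sigma$; this acts loxodromically on $\mathcal{C}(\Sigma)$ and hence, through the BBF embedding, loxodromically on $X$. Choosing $g \in \MCG(S)$ with $g(\Sigma) \neq \Sigma$, the conjugate $g\phi g^{-1}$ is a second loxodromic supported on $g(\Sigma)$; a standard ping-pong argument, using the large projections between $\Sigma$ and $g(\Sigma)$ that non-displaceability makes available, shows these two loxodromics have disjoint fixed-point pairs in $\partial X$, whence $|\Lambda(\MCG(S))| = \infty$. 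The case of $\MCG^{\pm}(S)$ is identical, since orientation-reversing homeomorphisms also permute $\mathbb{Y}$ and respect the projection data.

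The principal obstacle is verifying the BBF axioms uniformly across the (possibly uncountable) orbit $\mathbb{Y}$: one must ensure that no projection constant degenerates as $Y$ ranges over all translates. This is precisely where non-displaceability is essential, for without it translates of $\Sigma$ could become arbitrarily disjoint from $\Sigma$, destroying the Behrstock estimates. A secondary, more technical obstacle is handling $\Sigma$ of low complexity, where $\mathcal{C}(\Sigma)$ requires the modified definition; this is absorbed by well-established adaptations in the finite-type theory.
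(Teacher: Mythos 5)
This lemma is not proved in the paper at all: it is imported as \cite[Theorem 2.9]{rafi}, and the authors explicitly decline to describe the underlying Bestvina--Bromberg--Fujiwara machinery. So there is no in-paper argument to compare against. What can be said is that your outline does track the strategy of the cited proof of Horbez--Qing--Rafi: take the $\MCG(S)$-orbit of $\Sigma$, attach curve graphs, equip the family with subsurface projections, verify the BBF axioms, and produce independent loxodromics from pseudo-Anosovs supported on translates of $\Sigma$. Your observations that one may enlarge $\Sigma$ to avoid low-complexity curve graphs (a superset of a non-displaceable subsurface is again non-displaceable, since $f(\Sigma)\cap\Sigma\subseteq f(\Sigma')\cap\Sigma'$) and that non-elementarity follows from two loxodromics with disjoint endpoint pairs are correct in outline.

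The gap is at the central step, which you dispose of with ``the finite-type arguments carry through verbatim.'' They do not, and this is precisely the content of the cited theorem. First, the projection $\pi_Y(Z)$ requires $Y$ and $Z$ to \emph{overlap} --- intersect essentially with neither isotopic into the other --- which is strictly stronger than the conclusion $f(\Sigma)\cap\Sigma\neq\emptyset$ supplied by non-displaceability; one must rule out nesting and coincidence of translates and control pairs that share boundary components, and one must then establish the Behrstock inequality and the uniform bound on $\mathrm{diam}\,\pi_Y(Z)$ for this specific (generally uncountable) family, rather than quote Masur--Minsky, whose constants are stated for subsurfaces of a fixed finite-type ambient surface. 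Second, continuity of the action of the topological group $\MCG(S)$ on the resulting quasi-tree is a separate point to establish (via openness of stabilizers of the finite collections of isotopy classes determining a point of $X$), not a consequence of ``projections varying continuously under isotopy,'' which is not a meaningful statement since $\MCG(S)$ is totally disconnected. As written, your text is a correct roadmap to the theorem of \cite{rafi} rather than a proof of it.
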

\begin{theorem}\label{mainthmnondis}
	Let $S$ be a connected orientable infinite-type surface without boundary which contains a non-displaceable subsurface. Then $\MCG(S)$ has the $R_{\infty}$-property.
\end{theorem}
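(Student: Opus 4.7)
The plan is to deduce $R(\varphi)=\infty$ for every $\varphi\in\Aut(\MCG(S))$ by combining the non-elementary hyperbolic action from \lemref{hqr} with the Delzant-style \lemref{delzant}, glued together through the standard mapping-torus trick of \corref{bijection}. Fix $\varphi$. First I would invoke \lemref{out} together with the discussion preceding it to realize $\varphi$ as conjugation by some $h\in\MCG^{\pm}(S)$ (orientation-preserving if $\varphi$ is inner, orientation-reversing otherwise---both cases are handled uniformly in what follows). Form the semidirect product $G_{\varphi}=\MCG(S)\rtimes_{\varphi}\Z$ with standard generator $t$; by \corref{bijection} the task is reduced to producing infinitely many $G_{\varphi}$-conjugacy classes inside the coset $\MCG(S)\cdot t$.

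Next I would build a homomorphism $\pi\colon G_{\varphi}\to\MCG^{\pm}(S)$ by sending $f\mapsto f$ on $\MCG(S)$ and $t\mapsto h$. The defining relation $tft^{-1}=\varphi(f)$ of $G_{\varphi}$ maps to $hfh^{-1}=\varphi(f)$ in $\MCG^{\pm}(S)$, which is tautological by the choice of $h$, so $\pi$ is well-defined. Composing $\pi$ with the non-elementary isometric action of $\MCG^{\pm}(S)$ on the unbounded hyperbolic space $X$ furnished by \lemref{hqr} yields an isometric $G_{\varphi}$-action on $X$, and this action is still non-elementary, because its restriction to the subgroup $\MCG(S)\le G_{\varphi}$ (on which $\pi$ is the identity) already was.

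Finally, $\MCG(S)$ is normal in $G_{\varphi}$ with abelian quotient $\Z$, so \lemref{delzant} applies to the $G_{\varphi}$-action on $X$ and forces every coset of $\MCG(S)$---in particular $\MCG(S)\cdot t$---to carry infinitely many $G_{\varphi}$-conjugacy classes. Combined with \corref{bijection} this gives $R(\varphi)=\infty$; since $\varphi$ was arbitrary, $\MCG(S)$ has the $R_{\infty}$-property. The one place where I expect a reader might pause is checking that the pulled-back $G_{\varphi}$-action is genuinely non-elementary, but this comes essentially for free: all the hyperbolic dynamics is already carried by the normal subgroup $\MCG(S)\le G_{\varphi}$, so faithfulness of $\pi$ is never required and the possible finite order of $h$ (hence a nontrivial kernel of $\pi$) causes no trouble. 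This is why the argument applies to inner and outer $\varphi$ simultaneously and subsumes the Dehn-twist count used in \thmref{mainthm1}.
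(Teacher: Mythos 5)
Your proof is correct and follows essentially the same route as the paper: the non-elementary isometric action from \lemref{hqr}, Delzant's \lemref{delzant} applied to the normal subgroup $\MCG(S)$ with abelian quotient, and the translation back to Reidemeister numbers via \corref{bijection}. The only difference is one of packaging: you pull the action back to $G_{\varphi}$ through the homomorphism $t\mapsto h$ and treat inner and outer $\varphi$ uniformly, whereas the paper handles the inner case separately (via \lemref{inner} and \corref{infinitecc}) and for outer $\varphi$ works directly inside $\MCG^{\pm}(S)$; your version is, if anything, slightly cleaner since it avoids identifying the infinite cyclic extension $G_{\varphi}$ with $\MCG^{\pm}(S)$.
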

\begin{proof}
 In view of \lemref{hqr}, the action of $\MCG(S)$ (resp. $\MCG^{\pm}(S)$) on $X$ is non-elementary. Then there is an element $g$ of $\MCG(S)$ such that $\tau(g)>0$ (i.e. $g$ is a loxodromic isometry with respect to this action). By \corref{infinitecc}, $\MCG(S)$ has infinitely many conjugacy classes (of loxodromic elements)\footnote{$\MCG(S)$ has infinitely many conjugacy classes by powers of a Dehn twist also as in \thmref{mainthm1}.}. Now let $\varphi\in \Aut(\MCG(S))$ which is not inner. Then by \lemref{out}\eqref{out2}, we have 
	$\MCG(S)_{\varphi}\cong\MCG(S)\rtimes_{\varphi}\Z/2\Z\cong\MCG^{\pm}(S)$, i.e. we have the following short exact sequence of groups
	$$1\rightarrow\MCG(S)\rightarrow \MCG(S)_{\varphi}\rightarrow \Z/2\Z\rightarrow 1,$$
	where $\Z/2\Z=\langle t\in \Homeo^{-}(S)\mid t^2=_{\MCG(S)}\Id\rangle$. 
	Therefore by \lemref{delzant}, $\MCG(S)$ contains infinitely many conjugacy classes of $\MCG^{\pm}(S)$.
	Hence by \corref{bijection}, $R(\varphi)=\infty$, which in turn implies that $\MCG(S)$ has the $R_{\infty}$-property.
\end{proof}
\begin{corollary}
With notations as in \thmref{mainthmnondis}, let $G<\MCG(S)$ be a finite index subgroup. Then $G$ has the $R_{\infty}$-property. 
\end{corollary}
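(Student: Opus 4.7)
The approach is to mimic the proof of \thmref{mainthmnondis}, using \lemref{out}\eqref{out3} (finiteness of $\Out(G)$) in place of \lemref{out}\eqref{out2}. First I would note that the non-elementary isometric action of $\MCG^{\pm}(S)$ on the hyperbolic space $X$ of \lemref{hqr} restricts to a non-elementary action of $G$, since $G$ has finite index in $\MCG^{\pm}(S)$ and passage to a finite-index subgroup preserves the limit set. Consequently $G$ contains a loxodromic isometry, and by \corref{infinitecc} it has infinitely many conjugacy classes; so for every inner $\varphi \in \Aut(G)$, \lemref{inner} gives $R(\varphi) = R(\Id) = \infty$.

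For an arbitrary $\varphi \in \Aut(G)$, I would apply \cite[Theorem 1.1]{bdr17}---exactly as it is used in \secref{prel} for the index-two inclusion $\MCG(S)<\MCG^{\pm}(S)$---to obtain $h \in \Homeo(S)$ with $\varphi(f) = hfh^{-1}$ for all $f \in G$. This furnishes a homomorphism $\Phi : G_\varphi \to \MCG^{\pm}(S)$ given by $gt^k \mapsto g[h]^k$, whose image $G\cdot \langle [h]\rangle$ contains $G$ and is therefore a finite-index subgroup of $\MCG^{\pm}(S)$; in particular, it inherits a non-elementary action on $X$. Pulling this back along $\Phi$ gives a non-elementary isometric action of $G_\varphi$ on $X$. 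Since $G\triangleleft G_\varphi$ with quotient $G_\varphi/G\cong \Z$ abelian, \lemref{delzant} yields that the coset $G\cdot t$ contains infinitely many $G_\varphi$-conjugacy classes, and \corref{bijection} then gives $R(\varphi)=\infty$. Hence $G$ has the $R_\infty$-property.

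The main obstacle is the middle step: one must verify carefully that \cite[Theorem 1.1]{bdr17} does apply to an arbitrary finite-index subgroup $G<\MCG^{\pm}(S)$ and realises every $\varphi\in\Aut(G)$ as conjugation by some element of $\Homeo(S)$, and not only to the index-two case $\MCG(S)<\MCG^{\pm}(S)$ already treated in \secref{prel}. A secondary point, easy but worth recording, is that the non-elementarity of the $\MCG^{\pm}(S)$-action on $X$ descends to any finite-index subgroup; this follows because a finite-index subgroup has orbits at bounded Hausdorff distance from those of the ambient group, and hence the same limit set.
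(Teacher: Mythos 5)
Your proposal is correct, and its first half --- restricting the non-elementary action of \lemref{hqr} to the finite-index subgroup $G$, producing a loxodromic element, and concluding via \corref{infinitecc} that $G$ has infinitely many conjugacy classes --- coincides exactly with the paper's proof. Where you diverge is in the treatment of non-inner automorphisms. The paper simply records that $\Out(G)$ is finite (\lemref{out}\eqref{out3}) and asserts that the conclusion follows from \corref{bijection}; you instead realise an arbitrary $\varphi\in\Aut(G)$ as conjugation by some $h\in\Homeo(S)$ via \cite[Theorem 1.1]{bdr17}, extend the action of $G$ on $X$ to an action of $G_\varphi$ through the homomorphism $gt^k\mapsto g[h]^k$ (whose image contains $G$ and hence still acts non-elementarily), and invoke \lemref{delzant} to find infinitely many conjugacy classes in the coset $G\cdot t$. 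This is essentially the argument of \thmref{mainthmnondis} transplanted to $G$, and it is arguably more complete than the paper's own version: finiteness of $\Out(G)$ together with \corref{bijection} does not by itself locate infinitely many conjugacy classes in the particular coset $G\cdot t$, which is precisely what Delzant's lemma supplies. As for the obstacle you flag, it is not a real one: \cite[Theorem 1.1]{bdr17} is stated for isomorphisms between finite-index subgroups of (extended) big mapping class groups --- this is also the source of \lemref{out}\eqref{out3} --- so every automorphism of $G$ is indeed induced by a homeomorphism of $S$, and your middle step goes through.
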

\begin{proof}
Since $G$ is a finite index subgroup of $\MCG(S)$, it acts non-elementarily on the hyperbolic space $X$ (\lemref{hqr}). Thus, there exists a loxodromic element $g\in G$ such that $\tau(g)>0$. By \corref{infinitecc} $G$ has infinitely many conjugacy classes. Now by \lemref{out}\eqref{out3} $\Out(G)$ is finite. Hence the proof follows by virtue of \corref{bijection}.
\end{proof}
\subsection{Surfaces without non-displaceable subsurfaces}
From now on assume that an infinite-type surface $S$ does not contain a non-displaceable subsurface. Thus, in view of Example \ref{ex}\eqref{finitegenus}, genus of $S$ is either zero or infinity. Also, by \cite[Theorem 6.1]{apv} either the space of ends $\Ends(S)$ of $S$ is self-similar or it is doubly pointed. For example, the space of ends of Loch Ness Monster surface is self-similar and that of Jacob ladder surface is doubly pointed. In order to determine the $R_{\infty}$-property of such surfaces, we discuss the following general construction originally due to Birman.
\subsection{Birman exact sequence:}
Let $S$ be a connected orientable surface (both finite-type and infinite-type) without boundary. Fix $x\in S$. Then  $f\in \Homeo^+(S)$ evaluated at $x$ gives rise to the following \emph{fiber bundle}
$$\Homeo^+(S,x)\longrightarrow\Homeo^+(S)\overset{p}{\longrightarrow}S,$$ 
where $p(f)=f(x)$ and $\Homeo^+(S,x)$ is the set of all orientation preserving homeomorphisms of $S$ which fix $x$. Then it induces the following homotopy long exact sequence 
\begin{center}
	
	\begin{tikzpicture}[descr/.style={fill=white,inner sep=1.5pt}]
	\matrix (m)[matrix of math nodes,row sep=1em,column sep=2.4em,text height=1.5ex, text depth=0.25ex]
	{ \cdots & \pi_1(\Homeo^+(S,x)) & \pi_1(\Homeo^+(S)) & \pi_1(S,x) \\
		& \pi_0(\Homeo^+(S,x)) & \pi_0(\Homeo^+(S)) & \pi_0(S) & \cdots. \\};
	\path[overlay,->, font=\scriptsize,>=latex]
	(m-1-1) edge (m-1-2)
	(m-1-2) edge (m-1-3)
	(m-1-3) edge (m-1-4)
	(m-1-4) edge[out=355,in=175] node[descr,yshift=0.3ex] {} (m-2-2)
	(m-2-2) edge (m-2-3)
	(m-2-3) edge (m-2-4)
	(m-2-4) edge (m-2-5);
	\end{tikzpicture}
\end{center}

Note that the group $\pi_1(\Homeo^+(S))$ is trivial (if $S$ is of infinite-type see \cite[Theorem 1.1]{y00} and if $S$ is finite-type see \cite[Theorem 1.14]{fm12}), and $\pi_0(S)=\{e\}$ as $S$ is connected. Therefore we have
\begin{align}\label{birman}
1\longrightarrow\pi_1(S,x)\overset{\mathcal{P}}{\longrightarrow}\MCG(S,x)\overset{\mathcal{F}}{\longrightarrow}\MCG(S)\longrightarrow 1,
\end{align}
where the first map $\mathcal{P}$ is called a point pushing map and the last map $\mathcal{F}$ is the  forgetful map. Note that $\pi_1(S,x)\cong \pi_0(\Homeo^+(S,x)\cap\Homeo_0(S))$ (see \cite[Section 1.2.3]{mj11}). 
We refer the reader to \cite[p. 100, Section 4.2.3]{fm12} for more details of the above discussion. It is called  the \emph{Birman exact sequence}. Below we assume that $S$ is a surface with exactly one isolated puncture.

\begin{lemma}\label{char-lemma}
	The fundamental group $\pi_1(S,x)$ is a characteristic subgroup of $\MCG(S,x)$.
\end{lemma}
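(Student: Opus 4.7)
The plan is to exploit the Bavard--Dowdall--Rafi classification of automorphisms (Lemma \ref{out}) to realize any $\varphi \in \Aut(\MCG(S \setminus x))$ as conjugation by a homeomorphism $h$ of $S \setminus x$, and then verify that such a conjugation sends point-pushes to point-pushes.

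First, I would apply the BDR theorem, exactly as in the discussion preceding Lemma \ref{out}, to write $\varphi(f) = h f h^{-1}$ for some $h \in \Homeo(S \setminus x)$. The homeomorphism $h$ induces a permutation of the end space $\Ends(S\setminus x) = \Ends(S) \sqcup \{x\}$, where $x$ now plays the role of a new isolated planar puncture introduced by the puncturing operation. The next step is to argue that $h$ fixes this new end at $x$: once this is established, $h$ extends continuously to a homeomorphism $\tilde{h}$ of $S$ with $\tilde{h}(x) = x$, and conjugation by $\tilde{h}$ yields a well-defined automorphism of $\MCG(S)$ to which $\varphi$ descends via the forgetful map $\mathcal{F}$.

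With $\tilde{h}$ in hand, the conclusion is routine. For any $\gamma \in \pi_1(S,x)$, the point-push transforms as
$$h\, \mathcal{P}(\gamma)\, h^{-1} = \mathcal{P}(\tilde{h}_\ast(\gamma)),$$
where $\tilde{h}_\ast \in \Aut(\pi_1(S,x))$ is the map induced on fundamental groups by $\tilde{h}$. Since $\tilde{h}_\ast(\gamma) \in \pi_1(S,x)$, this gives $\varphi(\pi_1(S,x)) \subseteq \pi_1(S,x)$, and applying the same reasoning to $\varphi^{-1}$ yields equality.

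The main obstacle I anticipate is the end-preservation step. It is not automatic that a homeomorphism of $S\setminus x$ must fix the end at $x$: when $S$ has other ends topologically equivalent to the new puncture, such a homeomorphism could in principle permute them with $x$, in which case conjugation by $h$ would send $\pi_1(S,x) = \ker \mathcal{F}$ to the kernel of the filling-in map at some other puncture, which is a genuinely different subgroup. One therefore needs either a topological distinguishability statement for the end at $x$ (for instance when $x$ is the unique isolated planar end of $S \setminus x$), or a more delicate algebraic argument that the specific coset $h \cdot \Homeo_0(S \setminus x)$ admits a representative fixing the end at $x$. This is the one point where the details of the proof will require care beyond a direct appeal to Lemma \ref{out}.
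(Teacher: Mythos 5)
Your route is the same as the paper's: invoke the Bavard--Dowdall--Rafi theorem (as in the discussion preceding \lemref{out}) to write $\varphi(f)=hfh^{-1}$ for some $h\in\Homeo(S\setminus x)$, and then check that conjugation by $h$ preserves $\ker\mathcal{F}=\pi_1(S,x)$. The paper phrases the final step slightly differently --- it picks $f\in\ker\mathcal{F}$ with representative $\psi$, passes to the induced homeomorphism $\overline{h\psi h^{-1}}$ of $S$, and observes it is isotopic to the identity because $\overline{\psi}$ is --- but this is the same computation as your identity $h\,\mathcal{P}(\gamma)\,h^{-1}=\mathcal{P}(\tilde h_*(\gamma))$, just read through $\ker\mathcal{F}$ rather than through $\mathrm{Im}(\mathcal{P})$.

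The step you single out as the main obstacle is exactly the point where the paper's own proof is weakest: it disposes of it with the parenthetical ``(as both $h$ and $\psi$ fix $x$)'', i.e.\ it asserts without argument that $h$ preserves the end of $S\setminus x$ created by deleting $x$, and hence extends over $x$. Your worry is well founded. If $S$ already has isolated planar ends, a homeomorphism of $S\setminus x$ may permute the puncture at $x$ with one of them, and conjugation by such an $h$ carries the point-pushing subgroup at $x$ to the point-pushing subgroup at a different puncture, which is in general a different subgroup of $\MCG(S\setminus x)$; so the lemma in full generality genuinely needs the distinguishability of the end at $x$ (or an argument that the coset $h\cdot\Homeo_0(S\setminus x)$ has a representative fixing that end). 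In the concrete applications in the paper this is harmless --- e.g.\ in \corref{r2-c} the end at $x$ is the unique isolated end of $\R^2\setminus C$, so every homeomorphism fixes it --- but neither your proposal nor the paper's proof closes this step in general. In short: same approach, same gap; the difference is that you have named the gap and the paper has not.
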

\begin{proof}
	As Equation \eqref{birman} is a short exact sequence, we have $\mathrm{ker}(\mathcal{F})=\mathrm{Im}(\mathcal{P}) =\pi_1(S,x)$. First, note that $\pi_1(S,x)$ is a normal subgroup, i.e. it is invariant under every inner automorphism of $\MCG(S,x)$.   Now let $\varphi\in\Aut(\MCG(S,x))$ which is not inner. By the discussion of \lemref{out} there exists an $h\in \Homeo^{-}(S,x)$ such that $\varphi(f)=hfh^{-1}$ for all $f\in \MCG(S,x)$. 
	Let $f\in \MCG(S,x)$ such that $\mathcal{F}(f)=\Id$. \\	
	\textbf{Claim:} $\varphi(f)\in \mathrm{ker}(\mathcal{F})$.\\	
	Let $\psi\in \Homeo^+(S,x)$ be an element representing $f$, i.e. $[\psi]=f$. Suppose  $\overline{\psi}$ is the image of $\psi$ in $\Homeo^+(S)$. Then $\overline{\psi}$ is isotopic to the identity homeomorphism of $S$, i.e. $[\overline{\psi}]=\Id$ since $f\in \mathrm{ker}(\mathcal{F})$.
	Now, $h\circ\psi\circ h^{-1}$ is an orientation preserving homeomorphism of $S$ fixing $x$ representing $h\circ f\circ h^{-1}$ (as both $h$ and $\psi$ fix $x$). Therefore $\overline{h\psi h^{-1}}$ is isotopic to the identity as $\overline{\psi}$ is homotopic to identity, i.e. $[\overline{h\psi h^{-1}}]=\Id$. Thus, $\varphi(f)\in \mathrm{ker}(\mathcal{F})$.
\end{proof}
\begin{corollary}\label{stosx}
	If $\MCG(S)$ has the $R_{\infty}$-property then $\MCG(S,x)$ also has the $R_{\infty}$-property.  
\end{corollary}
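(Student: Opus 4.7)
The corollary should follow almost immediately by combining the two preceding results with the Birman exact sequence. My plan is to directly invoke \lemref{qtog} applied to the short exact sequence
\[
1\longrightarrow\pi_1(S,x)\overset{\mathcal{P}}{\longrightarrow}\MCG(S\setminus x)\overset{\mathcal{F}}{\longrightarrow}\MCG(S)\longrightarrow 1
\]
established in \eqref{birman}.

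To apply \lemref{qtog}, I need to verify two hypotheses: that the kernel is characteristic in the middle group, and that the quotient has the $R_\infty$-property. The first hypothesis is precisely the content of \lemref{char-lemma}, which tells us that $\pi_1(S,x)$ is characteristic in $\MCG(S\setminus x)$. The second hypothesis, namely that $\MCG(S)$ has the $R_\infty$-property, is exactly the standing assumption of the corollary. Thus all the ingredients are in place, and \lemref{qtog} immediately yields that $\MCG(S\setminus x)$ has the $R_\infty$-property as desired.

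There is essentially no obstacle here since the hard work has already been done: the Birman exact sequence is set up in \eqref{birman}, the characteristic property of $\pi_1(S,x)$ is established in \lemref{char-lemma} (this was the genuinely nontrivial step, relying on the description of $\Aut(\MCG(S\setminus x))$ via the outer automorphism group and on the observation that conjugation by an orientation-reversing homeomorphism $h\in\Homeo^-(S\setminus x)$ preserves the kernel of the forgetful map $\mathcal{F}$), and the transfer principle \lemref{qtog} from \cite{MS} is cited from the literature. The corollary is therefore a one-line application of these three facts.
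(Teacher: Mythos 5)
Your proposal is correct and matches the paper's own proof exactly: the paper likewise derives the corollary by applying \lemref{qtog} to the Birman exact sequence \eqref{birman}, using \lemref{char-lemma} to verify that $\pi_1(S,x)$ is characteristic in $\MCG(S\setminus x)$. Nothing to add.
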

\begin{proof}
	This follows from Equation \eqref{birman}, \lemref{qtog}, and \lemref{char-lemma}.
\end{proof}
\noindent
Now, let $C$ denote the Cantor set.
\begin{lemma}\label{s2-c}
	Then $\MCG(\mathbb{S}^2\setminus C)$ has the $R_{\infty}$-property.
\end{lemma}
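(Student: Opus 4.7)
The plan is to leverage the natural action of $\MCG(\mathbb{S}^2 \setminus C)$ on its end space, which is homeomorphic to $C$ itself (since the surface has genus zero, all ends are planar). I would exhibit a short exact sequence
\begin{equation*}
1 \longrightarrow \PMCG(\mathbb{S}^2 \setminus C) \longrightarrow \MCG(\mathbb{S}^2 \setminus C) \overset{\rho}{\longrightarrow} \Homeo(C) \longrightarrow 1,
\end{equation*}
and then combine \lemref{qtog} with \lemref{cantorr}.

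The kernel of the end-action homomorphism $\rho$ is $\PMCG(\mathbb{S}^2 \setminus C)$ by definition, so the main technical step is surjectivity of $\rho$. For any $\phi \in \Homeo(C)$, the Ker\'ek\'j\'art\'o-Richards classification (applied to the pair $(\mathbb{S}^2\setminus C, \mathbb{S}^2\setminus C)$ via $\phi$) produces some $H \in \Homeo(\mathbb{S}^2 \setminus C)$ with $\bar H = \phi$. If $H$ happens to be orientation-reversing, I would post-compose with a fixed orientation-reversing self-homeomorphism $\sigma$ of $\mathbb{S}^2 \setminus C$ whose induced action on $C$ is trivial; such a $\sigma$ exists, for instance, by embedding $C$ in $\mathbb{S}^2$ symmetrically with respect to the reflection across a great circle and restricting that reflection. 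Then $\sigma \circ H$ is orientation-preserving and induces $\phi$ on ends.

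Next, I would verify that $\PMCG(\mathbb{S}^2 \setminus C)$ is a \emph{characteristic} subgroup of $\MCG(\mathbb{S}^2 \setminus C)$. By \lemref{out}\eqref{out2}, every $\varphi \in \Aut(\MCG(\mathbb{S}^2 \setminus C))$ is conjugation by some $h \in \Homeo(\mathbb{S}^2 \setminus C)$; hence if $f \in \PMCG(\mathbb{S}^2 \setminus C)$, the action of $hfh^{-1}$ on $\Ends(\mathbb{S}^2\setminus C) = C$ is $\bar h \circ \Id_C \circ \bar h^{-1} = \Id_C$, which places $\varphi(f)$ back in $\PMCG(\mathbb{S}^2 \setminus C)$.

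Finally, \lemref{cantorr} gives that $\Homeo(C)$ has the $R_{\infty}$-property, so \lemref{qtog} applied to the short exact sequence above yields the desired conclusion. The main obstacle I anticipate is establishing surjectivity of $\rho$; this is where genuine surface-topological input (namely the classification of infinite-type surfaces and the existence of an orientation-reversing symmetry fixing $C$ pointwise) enters, while everything else is a formal consequence of the lemmas already assembled.
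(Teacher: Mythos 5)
Your proposal is correct and follows exactly the paper's route: the short exact sequence $1\to\PMCG(\mathbb{S}^2\setminus C)\to\MCG(\mathbb{S}^2\setminus C)\to\Homeo(C)\to 1$ combined with \lemref{cantorr} and \lemref{qtog}. The only difference is that you spell out the surjectivity of the end-action map and the characteristic-subgroup verification, which the paper leaves implicit (citing the survey for the former); both of your arguments for these points are sound.
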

\begin{proof}
	Observe that $\Ends(\mathbb{S}^2\setminus C)$ is homeomorphic to $C$. The mapping class group $\MCG(\mathbb{S}^2\setminus C)$ acts on its space of ends $C$ by homeomorphisms. This in turn induces the following short exact sequence of groups 
	\[1\rightarrow\PMCG(\mathbb{S}^2\setminus C)\rightarrow\MCG(\mathbb{S}^2\setminus C)\rightarrow\Homeo(C)\longrightarrow 1.\]
Now, in view of \lemref{cantorr}, $\Homeo(C)$ possesses the $R_{\infty}$-property. As $\PMCG(\mathbb{S}^2\setminus C)$ is a characteristic subgroup of $\MCG(\mathbb{S}^2\setminus C)$, applying \lemref{qtog}, $\MCG(\mathbb{S}^2\setminus C)$ has the $R_{\infty}$-property. 
\end{proof}
\begin{corollary}\label{r2-c}
	The mapping class group $\MCG(\R^2\setminus C)$ has the $R_{\infty}$-property.
\end{corollary}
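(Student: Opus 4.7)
The key observation is the homeomorphism $\mathbb{R}^2 \cong \mathbb{S}^2 \setminus \{\infty\}$, which lets us write
$$\mathbb{R}^2 \setminus C \;\cong\; (\mathbb{S}^2 \setminus C) \setminus \{\infty\},$$
i.e., $\mathbb{R}^2 \setminus C$ is obtained from $\mathbb{S}^2 \setminus C$ by removing a single point. Since $C$ is closed and totally disconnected in $\mathbb{S}^2$, we can choose a representative of the Cantor set $C$ so that $\infty \notin C$, and hence deleting $\infty$ amounts to passing to $S \setminus x$ with $S = \mathbb{S}^2 \setminus C$.

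With this in mind, the plan is to invoke the two results proved immediately before: \lemref{s2-c} gives that $\MCG(\mathbb{S}^2 \setminus C)$ has the $R_{\infty}$-property, and \corref{stosx} says that the $R_{\infty}$-property passes from $\MCG(S)$ to $\MCG(S \setminus x)$ through the Birman exact sequence. Applying \corref{stosx} with $S = \mathbb{S}^2 \setminus C$ and $x = \infty$ therefore delivers the conclusion at once. In effect, one is just noting that $\MCG(\mathbb{R}^2 \setminus C)$ sits in the Birman short exact sequence
$$1 \longrightarrow \pi_1(\mathbb{S}^2 \setminus C, \infty) \longrightarrow \MCG(\mathbb{R}^2 \setminus C) \longrightarrow \MCG(\mathbb{S}^2 \setminus C) \longrightarrow 1,$$
and that by \lemref{char-lemma} the kernel is characteristic, so \lemref{qtog} propagates the $R_\infty$-property from the quotient to the total group.

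The only nontrivial point one should double-check before invoking \corref{stosx} is that the hypotheses of the Birman exact sequence and of \lemref{char-lemma} genuinely apply when $S = \mathbb{S}^2 \setminus C$ is infinite-type; but this is exactly the setting in which those statements were established (the derivation of \eqref{birman} needed only that $S$ be a connected orientable surface without boundary, which is satisfied). No additional obstacle arises, and this one-line deduction completes the proof.
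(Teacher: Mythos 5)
Your proposal is correct and follows essentially the same route as the paper: the paper's proof likewise writes down the Birman exact sequence $1\to\pi_1(\mathbb{S}^2\setminus C,x)\to\MCG(\mathbb{R}^2\setminus C)\to\MCG(\mathbb{S}^2\setminus C)\to 1$ and combines \lemref{s2-c}, \lemref{char-lemma}, and \lemref{qtog} (which is exactly what \corref{stosx} packages). Your explicit identification of $\mathbb{R}^2\setminus C$ with $(\mathbb{S}^2\setminus C)\setminus\{\infty\}$ is a harmless clarification of what the paper leaves implicit.
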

\begin{proof}
	Look at the following Birman exact sequence of groups (cf. Equation \eqref{birman}) 
	\[1\rightarrow\pi_1(\mathbb{S}^2\setminus C, x)\overset{\mathcal{P}}{\rightarrow}\MCG(\mathbb{R}^2\setminus C)\overset{\mathcal{F}}{\rightarrow}\MCG(\mathbb{S}^2\setminus C)\rightarrow 1.\] 
 Now note that $\pi_1(\mathbb{S}^2\setminus C, x)$ is a characteristic subgroup $\MCG(\R^2\setminus C)$ (by \lemref{char-lemma}). By \lemref{s2-c}, $\MCG(\mathbb{S}^2\setminus C)$ has the $R_{\infty}$-property. Hence the result follows from \lemref{qtog}.  
\end{proof}
\begin{remark}
	By a result of Bavard, $\MCG(\R^2\setminus C)$ acts non-elementarily on a hyperbolic space called the ray graph. Thus, the $R_{\infty}$-property of $\MCG(\mathbb{R}^2\setminus C)$ also follows by \corref{bijection} and \lemref{delzant}.
\end{remark}
\noindent
We end this section with the following important class of groups which has the $R_{\infty}$-property.
\begin{theorem}\label{111}
Let $S$ be an infinite-type surface such that the space of ends $\Ends(S)$ is a Cantor set $C$ with the following: either (a) $\mathrm{genus}(S)<\infty$ or (b) $\Ends(S)=\Ends^{np}(S)$.
Then $\MCG(S)$ possesses the $R_{\infty}$-property.
\end{theorem}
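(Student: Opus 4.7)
The plan is to follow the dichotomy in the hypothesis.

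\textbf{Case (a):} $\mathrm{genus}(S)=0$. By the Ker\'ek\'j\'art\'o--Richards classification recalled in \secref{prel}, any zero-genus connected infinite-type surface whose end space is a Cantor set is homeomorphic to $\mathbb{S}^2\setminus C$. Hence the conclusion is immediate from \lemref{s2-c}.

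\textbf{Case (b):} $\Ends(S)=\Ends^{np}(S)$. I would mimic the strategy of \lemref{s2-c} for our surface. The natural action of $\MCG(S)$ on $\Ends(S)\cong C$ yields a homomorphism $\rho:\MCG(S)\to\Homeo(C)$ whose kernel is $\PMCG(S)$ by definition. The key step is to verify the surjectivity of $\rho$: given $\phi\in\Homeo(C)$, the hypothesis $\Ends^{p}(S)=\emptyset$ makes the partition of $\Ends(S)$ into planar and non-planar ends trivially $\phi$-invariant, so the Ker\'ek\'j\'art\'o--Richards classification produces some $F\in\Homeo(S)$ with $F|_{\Ends(S)}=\phi$. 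If $F$ happens to be orientation-reversing, one pre-composes with an orientation-reversing homeomorphism of $S$ supported on a compact subsurface of positive genus (such a homeomorphism exists because $S$ has infinite genus, and being compactly supported it acts trivially on $\Ends(S)$). This yields the short exact sequence
\[1 \longrightarrow \PMCG(S) \longrightarrow \MCG(S) \overset{\rho}{\longrightarrow} \Homeo(C) \longrightarrow 1.\]

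By the discussion preceding \lemref{out}, every automorphism of $\MCG(S)$ has the form $f\mapsto hfh^{-1}$ for some $h\in\Homeo(S)$, and such conjugation preserves the kernel of the action on $\Ends(S)$; thus $\PMCG(S)$ is a characteristic subgroup of $\MCG(S)$. Combining \lemref{cantorr} (which gives that $\Homeo(C)$ has the $R_{\infty}$-property) with \lemref{qtog} applied to the sequence above, we conclude that $\MCG(S)$ has the $R_{\infty}$-property.

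The main obstacle is the surjectivity of $\rho$ in case (b) under the orientation-preservation constraint; once that is established, everything reduces to \lemref{qtog} applied to the machinery already set up in the paper, exactly parallel to the argument for $\MCG(\mathbb{S}^2\setminus C)$.
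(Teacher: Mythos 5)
Your overall strategy coincides with the paper's: in both cases the paper invokes the short exact sequence $1\to\PMCG(S)\to\MCG(S)\to\Homeo(C)\to 1$ (citing \cite[Section 2.4.1]{bigsurvey} for surjectivity onto $\Homeo(\Ends(S),\Ends^{np}(S))$, which equals $\Homeo(C)$ under either hypothesis) and then applies \lemref{cantorr} and \lemref{qtog}; your reduction of case (a) to \lemref{s2-c} is the same argument one level removed, since $S\cong\mathbb{S}^2\setminus C$ there by the classification.

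There is, however, a genuine error in your surjectivity argument for case (b): no orientation-reversing homeomorphism of a connected surface can be supported on a compact subsurface. A compactly supported homeomorphism is the identity on a nonempty open set, hence orientation-preserving there, and since orientation-preservation is a local condition that is constant on a connected surface, every compactly supported homeomorphism is orientation-preserving --- the genus enclosed by the support is irrelevant. What you actually need is an orientation-reversing homeomorphism of $S$ that acts trivially on $\Ends(S)$; such a map exists (e.g., the reflection of a standard model of $S$ embedded symmetrically in $\R^3$, as for the blooming Cantor tree, which fixes every end) but it is necessarily not compactly supported. With that substitution --- or by simply citing \cite[Section 2.4.1]{bigsurvey} for the exact sequence, as the paper does --- your proof goes through.
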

\begin{proof}
If $\mathrm{genus}(S)<\infty$, then $\Ends^{np}(S)=\emptyset$. So in both cases we get the following short exact sequence of groups (cf. \cite[Section 2.4.1]{bigsurvey})
\[1\rightarrow\PMCG(S)\rightarrow\MCG(S)\rightarrow\Homeo(C)\longrightarrow 1.\] By \lemref{cantorr}, $\Homeo(C)$ satisfies the $R_{\infty}$-property. Hence by \lemref{qtog}, $\MCG(S)$ satisfies the $R_{\infty}$-property since $\PMCG(S)$ is a characteristic subgroup.
\end{proof}
\begin{remark}
For example, the Blooming Cantor tree surface ($\mathrm{genus}(S)=\infty$ and $\Ends(S)=\Ends^{np}(S)=C$) and the Cantor tree surface ($\simeq\mathbb{S}^2\setminus C$ and $\mathrm{genus}(S)=0$) satisfy the hypothesis of the previous theorem. See \cite{bigsurvey}, for details.
\end{remark}
\section{Isogredience classes}\label{isogred}
Most of what is contained in this section are known and can mainly be found in \cite{ft15}, \cite{bdr21} and the references therein. Still, it seems worthwhile to combine several results in a fashion that suits our purpose and make the paper more self-contained. 
Suppose $\Phi\in \Out(G):=\Inn(G)\backslash\Aut(G)$. Two elements $\alpha, \beta\in \Phi$ are said to be \emph{isogredient} (or similar) if $\beta=i_h\circ \alpha \circ i_{h^{-1}}$ for some $h\in G$, where $i_h(g)=hgh^{-1}$ for all $g\in G$. Observe that  this is an equivalence relation on $\Phi$. Fix a representative $\gamma\in \Phi$. Then $\alpha=i_a\circ \gamma$ and $\beta=i_b\circ \gamma$ for some $a,b\in G$. Therefore 
\begin{align*}
i_b\circ \gamma&=\beta=i_h\circ \alpha \circ i_{h^{-1}}
=i_h\circ i_a\circ \gamma \circ i_{h^{-1}}\\
i_b&=i_h\circ i_a\circ (\gamma \circ i_{h^{-1}}\circ\gamma^{-1})\\
i_b&=i_h\circ i_a\circ i_{\gamma(h^{-1})}\\
i_b&=i_{ha\gamma(h^{-1})}.
\end{align*}
Thus, $\alpha$ and $\beta$ are isogredient if and only if $b=ha\gamma(h^{-1})c$ for some $c\in Z(G)$, the center of $G$.
Let $S(\Phi)$ denote the number of isogredience classes of $\Phi$. If $\Phi=\overline{\textup{Id}}$, then $S(\overline{\textup{Id}})$ is the number of ordinary conjugacy classes of $G/Z(G)$. A group $G$ has the \emph{$S_{\infty}$-property} if $S(\Phi)=\infty$ for all $\Phi\in \Out(G)$. Observe that if a group $G$ has the $S_{\infty}$-property then $G$ has the $R_{\infty}$-property. We summarize this discussion as follows:
\begin{lemma}\cite[cf. Theorem 3.4]{ft15}\label{rvss}
	\begin{enumerate}[leftmargin=*]
	\item\label{1} If $G$ has the $S_{\infty}$-property then $G$ has the $R_{\infty}$-property.
	\item\label{2}  If $G$ is centerless and $G$ has the $R_{\infty}$-property then $G$ has the $S_{\infty}$-property.
	\end{enumerate}
	
\end{lemma}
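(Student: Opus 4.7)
The plan is to establish one key identification and then deduce both parts from it. Fix $\Phi \in \Out(G)$ and any representative $\gamma \in \Phi$; since $Z(G)$ is characteristic, $\gamma$ descends to $\bar{\gamma} \in \Aut(G/Z(G))$. The map $a \mapsto i_a \circ \gamma$ is a surjection $G \twoheadrightarrow \Phi$ whose kernel is exactly $Z(G)$, hence a bijection $G/Z(G) \xrightarrow{\sim} \Phi$. The calculation performed in the excerpt just above the lemma shows that $i_a \circ \gamma$ and $i_b \circ \gamma$ are isogredient if and only if $b = h a \gamma(h^{-1}) c$ for some $h \in G$ and $c \in Z(G)$, which modulo $Z(G)$ is precisely the relation $\bar{b} = \bar{h}\,\bar{a}\,\bar{\gamma}(\bar{h}^{-1})$ in $G/Z(G)$. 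Therefore
\[
S(\Phi) \;=\; R(\bar{\gamma}),
\]
and by \lemref{inner} the right-hand side depends only on the outer class of $\bar{\gamma}$, which is compatible with $S(\Phi)$ being intrinsic to $\Phi$.

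For part \eqref{1}, I would take an arbitrary $\varphi \in \Aut(G)$ and set $\Phi := \overline{\varphi}$. The quotient $\pi \colon G \to G/Z(G)$ carries any $\varphi$-twisted conjugacy $y = g x \varphi(g^{-1})$ to the $\bar{\varphi}$-twisted conjugacy $\bar{y} = \bar{g}\,\bar{x}\,\bar{\varphi}(\bar{g}^{-1})$, so $\pi$ induces a \emph{surjection} of the $\varphi$-classes in $G$ onto the $\bar{\varphi}$-classes in $G/Z(G)$; consequently $R(\varphi) \geq R(\bar{\varphi}) = S(\Phi)$. If $G$ has the $S_{\infty}$-property then the right-hand side is infinite, forcing $R(\varphi) = \infty$ for every $\varphi$.

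For part \eqref{2}, when $Z(G) = \{e\}$ one has $G/Z(G) = G$ and $\bar{\gamma} = \gamma$, so the identification collapses to $S(\Phi) = R(\gamma)$ for any representative $\gamma \in \Phi$. The $R_{\infty}$-hypothesis then gives $R(\gamma) = \infty$ for every $\gamma \in \Aut(G)$, so $S(\Phi) = \infty$ for every $\Phi \in \Out(G)$.

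The only step requiring genuine care is the identification $S(\Phi) = R(\bar{\gamma})$ — specifically verifying that the bijection $G/Z(G) \to \Phi$ transports isogredience exactly to $\bar{\gamma}$-twisted conjugacy. Since the paragraph immediately preceding the lemma already exhibits the defining equation $b = h a \gamma(h^{-1}) c$, this is essentially a bookkeeping translation, and the remaining deductions in \eqref{1} and \eqref{2} are one-line consequences.
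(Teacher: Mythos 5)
Your proposal is correct and follows essentially the same route as the paper: both rest on the computation preceding the lemma, which identifies isogredience in $\Phi$ with $\bar\gamma$-twisted conjugacy in $G/Z(G)$, giving $S(\Phi)=R(\bar\gamma)$; part \eqref{1} then follows by lifting along $G\to G/Z(G)$ (you prove the needed surjection of Reidemeister classes directly, where the paper invokes \lemref{qtog}, whose proof is that same surjection), and part \eqref{2} is the observation that the identification collapses to $S(\Phi)=R(\gamma)$ when $Z(G)$ is trivial. No gaps.
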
 
\noindent 
Levitt and Lustig \cite[Theorem 3.5]{ll00} proved that any non-elementary hyperbolic group has the $S_{\infty}$-property. A torsion-free non-elementary hyperbolic group is centerless, therefore it satisfies the $S_{\infty}$-property if and only if it satisfies the $R_{\infty}$-property. 
\begin{remark}
	Before these, Cohen and Lustig \cite[Proposition 5.4]{cl89} studied the $S_{\infty}$-property for a free group $\mathbb{F}_n$ although they did not use the terminology $S_{\infty}$. Best to our knowledge, the term `group with the $S_{\infty}$-property' was introduced by Fel'shtyn and Troitsky in \cite{ft15}, especially see Lemma 3.3 and Theorem 3.4 for the interplay between the $R_{\infty}$-property and $S_{\infty}$-property.
\end{remark}
\noindent
Now we are in a position to prove the main result of this section.
\begin{theorem}\label{mainthm2}
	Let $S$ be a connected orientable infinite-type surface without boundary. Then we have the following:
	\begin{enumerate}[leftmargin=*]
	\item the $($extended$)$ big mapping class group $\MCG^{\pm}(S)$ possesses the $S_{\infty}$-property.
	\item If $S$ contains a connected non-displaceable subsurface of finite-type then $\MCG(S)$ has the $S_{\infty}$-property.
	\item $\MCG(S)$ has the $R_{\infty}$-property if and only if $\MCG(S)$ has the $S_{\infty}$-property.
	\end{enumerate}
\end{theorem}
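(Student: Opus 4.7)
The plan is to deduce all three parts directly from the groundwork already laid, in particular from the centerless Lemma~\ref{center} together with the interplay between $R_\infty$ and $S_\infty$ recorded in Lemma~\ref{rvss}. The key observation is that since both $\MCG(S)$ and $\MCG^{\pm}(S)$ are centerless by Lemma~\ref{center}, the equivalence between the two properties is automatic for these groups via Lemma~\ref{rvss}\eqref{2}, while the forward implication $S_\infty \Rightarrow R_\infty$ is just Lemma~\ref{rvss}\eqref{1}. So the whole theorem reduces to quoting the $R_\infty$-results already proved in Section~\ref{results}.

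For part (1): I would first invoke \thmref{mainthm1}, which shows that $\MCG^{\pm}(S)$ has the $R_{\infty}$-property for every connected orientable infinite-type surface $S$ without boundary. By Lemma~\ref{center}, $Z(\MCG^{\pm}(S)) = \{\Id\}$, so $\MCG^{\pm}(S)$ is centerless. Therefore Lemma~\ref{rvss}\eqref{2} upgrades the $R_\infty$-property to the $S_\infty$-property, giving (1).

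For part (2): The hypothesis on $S$ is exactly the hypothesis of \thmref{mainthmnondis}, which yields that $\MCG(S)$ has the $R_\infty$-property. Again Lemma~\ref{center} tells us that $Z(\MCG(S))=\{\Id\}$, so a second application of Lemma~\ref{rvss}\eqref{2} promotes this to the $S_\infty$-property.

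For part (3): The forward direction ($S_\infty \Rightarrow R_\infty$) is immediate from Lemma~\ref{rvss}\eqref{1} and requires no geometric input. For the converse, assume $\MCG(S)$ has the $R_\infty$-property; since $\MCG(S)$ is centerless (Lemma~\ref{center}), Lemma~\ref{rvss}\eqref{2} applies verbatim to deliver the $S_\infty$-property. The entire argument is essentially a bookkeeping step; there is no serious obstacle, since the technical content has already been absorbed into \lemref{center}, \thmref{mainthm1}, \thmref{mainthmnondis} and \lemref{rvss}. The only thing worth highlighting is that the centerless-ness of $\MCG(S)$ is what makes the equivalence in (3) hold unconditionally, even without the non-displaceability assumption appearing in (2).
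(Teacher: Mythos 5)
Your proposal is correct and follows exactly the paper's own argument: parts (1) and (2) quote \thmref{mainthm1} and \thmref{mainthmnondis} respectively for the $R_\infty$-property and then upgrade via \lemref{center} and \lemref{rvss}\eqref{2}, while part (3) combines \lemref{rvss} with \lemref{center}. No differences worth noting.
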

\begin{proof}
	\begin{enumerate}[leftmargin=*]
	\item In view of \thmref{mainthm1}, $\MCG^{\pm}(S)$ has the $R_{\infty}$-property. Now by virtue of \lemref{center} we have $Z(\MCG^{\pm}(S))=\{\Id\}$. Therefore by \lemref{rvss}\eqref{2} $\MCG^{\pm}(S)$ possesses the $S_{\infty}$-property.
	\item By \thmref{mainthmnondis} $\MCG(S)$ satisfies the $R_{\infty}$-property. By \lemref{center}, the center is trivial, i.e.  $Z(\MCG(S))=\{\Id\}$. Therefore by \lemref{rvss}\eqref{2}, $\MCG(S)$ has the $S_{\infty}$-property.
	\item This follows from \lemref{rvss} and \lemref{center}. 
	\end{enumerate}
\end{proof}
\begin{remark}
If $S$ is a finite-type surface without boundary (except few cases) then we know that the center $Z(\MCG(S))=\{\Id\}$ (see \cite[Theorem 3.10]{fm12}). Now $\MCG(S)$ satisfies the $R_{\infty}$-property by \cite[cf. Theorem 6.4, Theorem 4.3]{fg10}. So, again  in view of \lemref{rvss}, $\MCG(S)$ has the $S_{\infty}$-property.
\end{remark}
\noindent
We would like to end this paper with the following problem.

\medskip
\noindent
\textbf{Problem:} Suppose that $S$ is either the Loch Ness Monster surface or the Jacob Ladder surface. Does $\MCG(S)$ satisfy the $R_{\infty}$-property? 

\medskip
\noindent
\textbf{Acknowledgement:} The authors would like to thank Pranab Sardar for bringing the notion of infinite-type surfaces to our notice and for his encouragement throughout this work. We thank Anirban Bose for some useful discussion. 
We also thank the reviewer for crucial comments and suggestions which improved the exposition considerably.


\end{document}